\newtheorem{theorem}{Theorem}[section]
\newtheorem*{theorem*}{Theorem (Lehmann--Scheff\'e)}
\newtheorem{lemma}[theorem]{Lemma}
\newtheorem{corollary}[theorem]{Corollary}
\newtheorem{proposition}[theorem]{Proposition}
\theoremstyle{definition}
\newtheorem{remark}[theorem]{Remark}
\theoremstyle{definition} 
\newtheorem*{remark*}{Remark}
\numberwithin{equation}{section}
\newcommand{\R}{\mathbb{R}}
\newcommand{\N}{\mathbb{N}}
\newcommand{\Z}{\mathbb{Z}}
\newcommand{\CC}{\mathbb{C}}
\newcommand{\JJ}{\mathcal{J}}
\newcommand{\LL}{\mathcal{L}}
\newcommand{\RR}
{\mathcal{R}}
\newcommand{\tA}{\tilde A}
\newcommand{\lb}{\llbracket}
\newcommand{\rb}{\rrbracket}
\newcommand{\al}{\alpha}
\newcommand{\be}{\beta}
\newcommand{\ga}{\gamma}
\newcommand{\Ga}{\Gamma}
\newcommand{\de}{\delta}
\newcommand{\De}{\Delta}
\newcommand{\vp}{\varepsilon}
\newcommand{\ka}{\kappa}
\newcommand{\la}{\lambda}
\newcommand{\La}{\Lambda}
\newcommand{\0}{\mathbf{0}}
\newcommand{\1}{\mathbf{1}}
\newcommand{\aal}{{\bm{\al}}}
\newcommand{\bbe}{{\bm{\be}}}
\newcommand{\lla}{{\bm{\la}}}
\renewcommand{\aa}{\mathbf{a}}
\newcommand{\cc}{\mathbf{c}}
\newcommand{\iii}{\mathbf{i}}
\newcommand{\jj}{\mathbf{j}}
\newcommand{\kk}{\mathbf{k}}
\newcommand{\elll}{\mathbf{q}}
\newcommand{\nn}{\mathbf{n}}
\newcommand{\uu}{\mathbf{u}}
\newcommand{\vv}{\mathbf{v}}
\newcommand{\ww}{\mathbf{w}}
\newcommand{\xx}{\mathbf{x}}
\newcommand{\yy}{\mathbf{y}}
\newcommand{\dd}{\operatorname{d}\!}
\newcommand{\ii}{\operatorname{I}}
\newcommand{\Alt}{\mathsf{Alt}}
\begin{document}

\title[Approximating multiple sums by multiple integrals only]{
Approximating sums by integrals only: \\ 
multiple sums and sums over lattice polytopes}



\author{Iosif Pinelis}
\address{Department of Mathematical Sciences, Michigan Technological University}
\email{ipinelis@mtu.edu}

\subjclass[2010]{Primary 
41A35, 52B20; secondary 26B20, 26D15, 
40A05, 40A25, 
41A10, 41A17, 41A25, 41A55, 41A58, 41A60, 41A80,  
65B05, 65B10, 65B15, 65D30, 65D32}


%

 	
%
%

 	
\keywords{Euler--Maclaurin summation formula, 
alternative summation formula, 
sums, multiple sums, series, multi-index series, integrals, derivatives, antiderivatives, approximation, inequalities, divergent series, lattice polytopes}

\begin{abstract} 
The Euler--Maclaurin (EM) summation formula is used in many theoretical studies and numerical calculations. It approximates the sum 
$\sum_{k=0}^{n-1} f(k)$ of values of a function $f$ by a linear combination of a corresponding integral of $f$ and values of its higher-order derivatives $f^{(j)}$. 
An alternative (Alt) summation formula was recently presented by the author, which approximates the sum by a linear combination of integrals only,  
without using high-order derivatives of $f$. 
It was shown that the Alt formula will in most cases outperform, or greatly outperform, the EM formula in terms of the execution time and memory use. 
In the present paper, a multiple-sum/multi-index-sum extension of the Alt formula is given, with  
applications to summing possibly divergent multi-index series and  
to sums over the integral points of integral lattice polytopes. 
\end{abstract}

\maketitle

\setcounter{tocdepth}{1}
\tableofcontents

\section{Introduction}\label{intro}

The Euler--Maclaurin (EM) summation formula
can be written as follows: 
\begin{equation*}\label{eq:EMintro}
	\sum_{k=0}^{n-1} f(k)\approx 
	\int^n_0\dd x\, f(x)  
    +
    \sum_{j=1}^{2m-1}\frac{B_j}{j!}[f^{(j-1)}(n) - f^{(j-1)}(0)], \tag{EM}      
\end{equation*} 
where $f\colon\R\to\R$ is a smooth enough function, 
$B_{j}$ is the $j$th Bernoulli number, and $n$ and $m$ are natural numbers. The EM approximation is exact when $f$ is a polynomial of degree $<2m-1$. 
%
The EM formula has been used 
in a large number of theoretical studies and numerical calculations. 
Clearly, to use the EM formula in a theoretical or computational study, one will usually need to have an  antiderivative $F$ of $f$ and the derivatives $f^{(j-1)}$ for $j=1,\dots,2m-1$ in tractable or, respectively, computable form. 

In \cite{euler-maclaurin-alt}, an alternative summation formula (Alt) was offered, which approximates the sum $\sum_{k=0}^{n-1} f(k)$ by a linear combination of values of an antiderivative $F$ of $f$ only, without using values of any derivatives of $f$: 
\begin{equation*}\label{eq:intro}
	\sum_{k=0}^{n-1} f(k)\approx 
	\sum_{j=1-m}^{m-1}\tau_{m,1+|j|}\,\int_{j/2-1/2}^{n-1/2-j/2}\dd x\, f(x), \tag{Alt}  
\end{equation*}
where $f$ is again a smooth enough function, 
the coefficients $\tau_{m,r}$ are certain rational numbers not depending on $f$ and such that $\sum_{j=1-m}^{m-1}\tau_{m,1+|j|}=1$, and $n$ and $m$ are natural numbers. 
Similarly to the case of the EM formula, the Alt approximation is exact when $f$ is a polynomial of degree $<2m$. 
It was shown in \cite{euler-maclaurin-alt} that the Alt formula should be usually expected to outperform the EM one.  

Extensions of the EM formula to the multiple sums, including sums over the integral points of integral lattice polytopes, have been of significant interest; see e.g.\ \cite{KSW_ProcNAS,KSW_duke07}. In the present paper, a multiple-sum/multi-index-sum extension of the Alt formula will be given. 
The main result of this paper,  
Theorem~\ref{th:}, is then extended to sums over the integral points of integral lattice polytopes as well. 

\medskip

The rest of this paper is organized as follows.  

In Section~\ref{result}, the multi-index Alt formula is 
stated, with discussion. 

In Section~\ref{series}, an application of the Alt 
formula 
to summing possibly divergent multi-index series is given. 
A shift trick then allows one to make the remainder in the Alt formula arbitrarily small. 

In Section~\ref{polytopes}, the mentioned extension to sums over the integral points of integral lattice polytopes is presented. 

The necessary proofs are deferred to Section~\ref{proof}. 

\medskip

At the end of this introduction, let us fix notation to be used in the rest of the paper:  
Suppose that $p$ and $m$ are natural numbers and $f\colon\R^p\to\R$ is a $2m$-times continuously differentiable function, with partial derivatives $f^{(\aal)}$, where $\aal=(\al_1,\dots,\al_p)\in\Z_+^p$ and $\Z_+:=\Z\cap[0,\infty)$. 
Generally, boldface letters will denote vectors in $\R^p$, in $\Z^p$, or in $\Z_+^p$, with the coordinates denoted by the corresponding non-boldface letters with the indices: $\xx=(x_1,\dots,x_p)\in\R^p$, $\uu=(u_1,\dots,u_p)\in\R^p$, $\vv=(v_1,\dots,v_p)\in\R^p$, $\nn=(n_1,\dots,n_p)\in\Z_+^p$, $\kk=(k_1,\dots,k_p)\in\Z_+^p$, $\jj=(j_1,\dots,j_p)\in\Z_+^p$, $\iii=(i_1,\dots,i_p)\in\Z_+^p$, $\aal=(\al_1,\dots,\al_p)\in\Z_+^p$,  
and $\bbe=(\be_1,\dots,\be_p)\in\Z^p$. 
Let $\ii\{A\}$ denote the indicator of an assertion $A$. 
Let $\|\aal\|:=\|\aal\|_1=
\al_1+\dots+\al_p
$; $\aal!:=\al_1!\cdots\al_p!$; $\xx^\aal:=x_1^{\al_1}\cdots x_p^{\al_p}$; $|\bbe|:=(|\be_1|,\dots,|\be_p|)$; $\1:=(1,\dots,1)\in\Z_+^p$; $\0:=0\1$; 
$\jj\vv:=(j_1v_1,\dots,j_pv_p)$; $\jj\ge\iii\mathrel{\overset{\text{def}}\iff} \iii\le\jj\mathrel{\overset{\text{def}}\iff} i_r\le j_r$ for $r=1,\dots,p$; 
$[\uu,\vv]:=\prod_{r=1}^p[u_r,v_r]$; 
\break 
$\wedge\xx:=x_1\wedge\dots\wedge x_p$; 
$\vee\xx:= 
x_1\vee\dots\vee x_p$; 
$\uu\wedge\vv:=(u_1\wedge v_1,\dots,
u_p\wedge v_p)$; 
$\uu\vee\vv:=(u_1\vee v_1,\dots,u_p\vee v_p)$; 
\begin{equation*}
\sum\limits_{\iii=\jj}^\kk:=\sum\limits_{\iii\in\Z_+^p\colon\jj\le\iii\le\kk}; 
\qquad 
\int_\uu^\vv \dd\xx\; h(\xx):=
(-1)^{\sum_{r=1}^p\ii\{u_r>v_r\} }\int_{[\uu\wedge\vv,\uu\vee\vv]}\dd\xx\; h(\xx); \qquad 
\int_\uu^\vv:=\int_\uu^\vv \dd \xx\,f(\xx). 
\end{equation*}
Let $\R_+^p:=[0,\infty)^p$.  

\section{
A multi-index alternative (Alt) to the EM formula}\label{result}

The following extension 
of \cite[Theorem 3.1]{euler-maclaurin-alt} to multiple sums
is the main result of this paper: 

\begin{theorem}\label{th:}
One has 
\begin{equation}\label{eq:}
	\sum_{\kk=\0}^{\nn-\1}f(\kk)\Big[=\sum_{k_1=0}^{n_1-1}\dots\sum_{k_p=0}^{n_p-1}f(k_1,\dots,k_p)\Big]
	=A_m-R_m,
\end{equation}
where 
\begin{alignat}{2}
	A_m\ \ :=\ \ \ \ 
	&\sum_{\jj=\1}^{m\1}\ga_{m,\jj}\sum_{\iii=\0}^{\jj-\1}\int_{\iii-\jj/2}^{\nn-\1+\jj/2-\iii}
	&&\ \ =\ \ \ \ \sum_{\jj=\1}^{m\1}\ga_{m,\jj}\sum_{\iii=\0}^{\jj-\1}\int_{-\1+\jj/2-\iii}^{\nn-\1+\jj/2-\iii}
	\label{eq:A_m} \\ 
	\ \ =\ \ \ \ &\sum_{\bbe=(1-m)\1}^{(m-1)\1}\tau_{m,\1+|\bbe|}\,\int_{\bbe/2-\1/2}^{\nn-\1/2-\bbe/2}
	&&\ \ =\ \ \ \ \sum_{\bbe=(1-m)\1}^{(m-1)\1}\tau_{m,\1+|\bbe|}\,\int_{-\1/2-\bbe/2}^{\nn-\1/2-\bbe/2}
	\label{eq:A_m,alt1} 
	\\ 	\ \ =\ \ \ \ &\sum_{\aal=\0}^{(m-1)\1}\tau_{m,\1+\aal}\,\sum_{\bbe\colon|\bbe|=\aal}\,\int_{\bbe/2-\1/2}^{\nn-\1/2-\bbe/2}	&&\ \ =\ \ \ \  \sum_{\aal=\0}^{(m-1)\1}\tau_{m,\1+\aal}\,\sum_{\bbe\colon|\bbe|=\aal}\,\int_{-\1/2-\bbe/2}^{\nn-\1/2-\bbe/2}
	\label{eq:A_m,alt2} 
\end{alignat} 
is the integral approximation to the sum $\sum_{\kk=\0}^{\nn-\1}f(\kk)$, 
\begin{equation}\label{eq:ga_j}
\ga_{m,\jj}:=\prod_{r=1}^p \ga_{m,j_r},\quad 
	\ga_{m,j}:=(-1)^{j-1}\,\frac2j\,\binom{2m}{m+j}\Big/ \binom{2m}{m}, 
\end{equation}
\begin{equation}\label{eq:tau_j}
\tau_{m,\jj}:=\prod_{r=1}^p \tau_{m,j_r},\quad
	\tau_{m,j}:=
	\sum_{\be=0}^{\lfloor m/2-j/2\rfloor}\ga_{m,j+2\be}
	\,=\sum_{\be=0}^\infty\ga_{m,j+2\be}, 
\end{equation}
and 
$R_m$ is the remainder given by the formula 
\begin{equation}\label{eq:R_m}
	R_m:=\frac{m}{2^{2m+p-1}}\,\sum_{\|\aal\|=2m}\frac1{\aal!}
\,\int_0^1\dd s\,(1-s)^{2m-1}\int_{-\1}^{\1}\dd \vv\,\vv^\aal
	\sum_{\jj=\1}^{m\1}\ga_{m,\jj}\,\jj^{\aal+\1} \sum_{\kk=\0}^{\nn-\1}f^{(\aal)}(\kk+s\jj\vv/2). 
\end{equation}
The sum of all the coefficients of the integrals in each of the expressions \eqref{eq:A_m}, \eqref{eq:A_m,alt1}, and \eqref{eq:A_m,alt2} of $A_m$ is   
\begin{equation}\label{eq:sum ga_j}
	\sum_{\jj=\1}^{m\1}\ga_{m,\jj}\,\sum_{\iii=\0}^{\jj-\1}1=\sum_{\jj=\1}^{m\1}\ga_{m,\jj}\, \jj^{\1}
	=\sum_{\bbe=(1-m)\1}^{(m-1)\1}\tau_{m,\1+|\bbe|}=1. 
\end{equation}

If $M_{2m}$ is a real number such that 
\begin{equation}\label{eq:<M}
	\Big|\sum_{\kk=\0}^{\nn-\1}f^{(\aal)}(\kk+\uu)\Big|\le M_{2m}\quad\text{for all}\  
\aal \ \text{with}\ \|\aal\|=2m 
	\quad\text{and\quad all}\ \uu\in(-m\1/2,m\1/2], 
\end{equation}
then the remainder $R_m$ can be bounded as follows: 
\begin{align}
	|R_m|&\le 
	\frac{M_{2m}}{2^{2m}}\,\sum_{\|\aal\|=2m}\frac1{(\aal+\1)!}
	\sum_{\jj=\1}^{m\1}|\ga_{m,\jj}|\,\jj^{\aal+\1} \label{eq:R<} \\ 
&\le M_{2m}\,\frac{1.0331(\pi m)^{(p+1)/2}}{(2m+1)!}\,(\ka pm)^{2m}, 
\label{eq:R<<}
\end{align}
where 
\begin{equation}\label{eq:ka}
	\ka:=\sqrt{\frac{\La_*}4}=0.27754\dots  
\end{equation}
and 
\begin{equation}\label{eq:La}
	\La_*:=\max_{0<t<1}\La(t)=0.3081\dots,\quad 
	\La(t):=(1-t)^{t-1} (1+t)^{-1-t} t^2. 
\end{equation} 
If $m\ge2$, then the factor $1.0331$ in \eqref{eq:R<<} can be replaced by $1.001$. 
\end{theorem}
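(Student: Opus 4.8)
\emph{Plan.}
I would derive \eqref{eq:} together with the explicit remainder \eqref{eq:R_m} from a single ``local'' identity at each lattice point $\kk$, proved by a $p$-variate Taylor expansion; the several forms of $A_m$, the coefficient sum \eqref{eq:sum ga_j}, and the two bounds on $R_m$ would then be added on. The basic input is the pair of moment relations behind \eqref{eq:ga_j}, namely $\sum_{j=1}^m\ga_{m,j}\,j=1$ and $\sum_{j=1}^m\ga_{m,j}\,j^{2i+1}=0$ for $1\le i\le m-1$. The first amounts to the alternating-binomial identity $\sum_{j=1}^m(-1)^{j-1}\binom{2m}{m+j}=\tfrac12\binom{2m}{m}$; for $i\ge1$ one notes that $j\mapsto(-1)^{j-1}j^{2i}\binom{2m}{m+j}$ is even in $j$, so the sum over $j=1,\dots,m$ is $\tfrac12\sum_{j=-m}^m(-1)^{j-1}j^{2i}\binom{2m}{m+j}$, which equals, up to sign, the $2m$-th finite difference of the polynomial $k\mapsto(k-m)^{2i}$ and hence vanishes since $2i<2m$. (Equivalently, these relations restate the fact, recalled in Section~\ref{intro}, that the one-variable Alt approximation is exact on polynomials of degree $<2m$, \cite[Theorem~3.1]{euler-maclaurin-alt}.)

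\emph{The local identity and the forms of $A_m$.}
For fixed $\kk$ put $Th:=\sum_{\jj=\1}^{m\1}\ga_{m,\jj}\int_{-\jj/2}^{\jj/2}\dd\xx\,h(\xx)$. Since $\ga_{m,\jj}=\prod_r\ga_{m,j_r}$ and $[-\jj/2,\jj/2]=\prod_r[-j_r/2,j_r/2]$, $T$ is the $p$-fold tensor product of the one-variable functional $h\mapsto\sum_{j=1}^m\ga_{m,j}\int_{-j/2}^{j/2}h$, and the moment relations give $T[\xx^\aal]=\prod_{r=1}^p\bigl(\sum_{j=1}^m\ga_{m,j}\int_{-j/2}^{j/2}x^{\al_r}\,\dd x\bigr)=\ii\{\aal=\0\}$ for every $\aal$ with $\|\aal\|\le2m-1$ (an odd $\al_r$ kills the $r$-th factor; an even $\al_r$ with $2\le\al_r\le2m-2$ is killed by $\sum_j\ga_{m,j}j^{\al_r+1}=0$; $\al_r=0$ contributes $1$). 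Applying $T$ to the Taylor formula with integral remainder
\[
f(\kk+\xx)=\sum_{\|\aal\|\le2m-1}\frac{\xx^\aal}{\aal!}\,f^{(\aal)}(\kk)+2m\sum_{\|\aal\|=2m}\frac{\xx^\aal}{\aal!}\int_0^1\dd s\,(1-s)^{2m-1}f^{(\aal)}(\kk+s\xx),
\]
the polynomial part collapses to $f(\kk)$, giving $f(\kk)=\sum_{\jj=\1}^{m\1}\ga_{m,\jj}\int_{\kk-\jj/2}^{\kk+\jj/2}-\rho_m(\kk)$, where
\[
\rho_m(\kk):=2m\sum_{\|\aal\|=2m}\frac1{\aal!}\int_0^1\dd s\,(1-s)^{2m-1}\sum_{\jj=\1}^{m\1}\ga_{m,\jj}\int_{-\jj/2}^{\jj/2}\dd\xx\,\xx^\aal f^{(\aal)}(\kk+s\xx).
\]
The substitution $\xx=\jj\vv/2$ (with $\|\aal\|=2m$) turns $\int_{-\jj/2}^{\jj/2}\dd\xx\,\xx^\aal f^{(\aal)}(\kk+s\xx)$ into $2^{-2m-p}\jj^{\aal+\1}\int_{-\1}^\1\dd\vv\,\vv^\aal f^{(\aal)}(\kk+s\jj\vv/2)$, so that $\sum_{\kk=\0}^{\nn-\1}\rho_m(\kk)=R_m$ of \eqref{eq:R_m} after an obvious interchange of the finite $\kk$-sum with the integrals, and $\sum_{\kk=\0}^{\nn-\1}f(\kk)=\sum_{\jj=\1}^{m\1}\ga_{m,\jj}\sum_{\kk=\0}^{\nn-\1}\int_{\kk-\jj/2}^{\kk+\jj/2}-R_m$. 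There remains to identify $\sum_{\jj}\ga_{m,\jj}\sum_{\kk}\int_{\kk-\jj/2}^{\kk+\jj/2}$ with $A_m$ and to pass between the four forms \eqref{eq:A_m}--\eqref{eq:A_m,alt2}; this is done one coordinate at a time, by the same rearrangements of the integration domains and the same substitution of the coefficients $\tau_{m,r}$ of \eqref{eq:tau_j} as in the one-variable case \cite[Theorem~3.1]{euler-maclaurin-alt}. Finally \eqref{eq:sum ga_j} reduces to $\sum_{\jj=\1}^{m\1}\ga_{m,\jj}\jj^\1=\prod_{r=1}^p\sum_{j=1}^m\ga_{m,j}j=1$ and $\sum_{\bbe=(1-m)\1}^{(m-1)\1}\tau_{m,\1+|\bbe|}=\bigl(\sum_{\beta=1-m}^{m-1}\tau_{m,1+|\beta|}\bigr)^p=1$.

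\emph{The two bounds on $R_m$, and the main obstacle.}
Inequality \eqref{eq:R<} is immediate from \eqref{eq:R_m}: for a.e.\ $(s,\vv)\in[0,1]\times[-\1,\1]$ every coordinate of $s\jj\vv/2$ lies in $(-m/2,m/2]$, so \eqref{eq:<M} bounds $\bigl|\sum_\kk f^{(\aal)}(\kk+s\jj\vv/2)\bigr|$ by $M_{2m}$; then $\int_0^1(1-s)^{2m-1}\dd s=\tfrac1{2m}$, $\int_{-\1}^\1|\vv^\aal|\,\dd\vv=\prod_r\tfrac2{\al_r+1}$, and $\aal!\,\prod_r(\al_r+1)=(\aal+\1)!$. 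For \eqref{eq:R<<} put $c_l:=\sum_{j=1}^m|\ga_{m,j}|\,j^{l+1}$; then $\sum_{\jj=\1}^{m\1}|\ga_{m,\jj}|\jj^{\aal+\1}=\prod_rc_{\al_r}$, so the right-hand side of \eqref{eq:R<} equals $\tfrac{M_{2m}}{2^{2m}}[x^{2m}]\,\Phi(x)^p$ (the coefficient of $x^{2m}$ in $\Phi^p$), where $\Phi(x):=\sum_{l\ge0}\tfrac{c_l}{(l+1)!}x^l=\sum_{j=1}^m|\ga_{m,j}|\tfrac{e^{jx}-1}{x}$ has nonnegative Taylor coefficients; hence $[x^{2m}]\Phi^p\le\xi^{-2m}\Phi(\xi)^p$ for every $\xi>0$ by Cauchy's estimate. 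Using $|\ga_{m,j}|=\tfrac2j\binom{2m}{m+j}/\binom{2m}{m}$, the symmetry $\binom{2m}{m+j}=\binom{2m}{m-j}$ and $\sum_{k=0}^{2m}\binom{2m}{k}e^{k\xi}=(1+e^\xi)^{2m}$, one gets $\Phi(\xi)\le\tfrac2\xi\,\tfrac{4^m}{\binom{2m}{m}}\cosh^{2m}(\xi/2)$; feeding in sharp two-sided Stirling bounds for $\binom{2m}{m}$ and minimizing $\xi^{-2m-p}\cosh^{2mp}(\xi/2)$ over $\xi>0$ (the optimal $\xi=2t$ solving $t\tanh t=\tfrac{2m+p}{2mp}$) yields \eqref{eq:R<<}, with $\ka$ and $\La$ of \eqref{eq:ka}--\eqref{eq:La} arising from the large-deviation rate $(1-t)^{-(1-t)}(1+t)^{-(1+t)}$ of $\binom{2m}{m+tm}/\binom{2m}{m}$ (the extra factor $t^2$ in $\La(t)$ coming from the weight $j^{2m}$), and the prefactor $1.0331(\pi m)^{(p+1)/2}$; replacing the crude Stirling constants by non-asymptotic ones valid for $m\ge2$ turns $1.0331$ into $1.001$. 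Everything through the several forms of $A_m$ is bookkeeping once the one-variable statements of \cite{euler-maclaurin-alt} are in hand; the genuine difficulty is exactly this last step: making the Cauchy/saddle-point estimate of $\Phi$ precise enough to deliver the explicit constant $\ka=0.27754\dots$ and the exact shape $(\pi m)^{(p+1)/2}(2m+1)!^{-1}(\ka pm)^{2m}$ of \eqref{eq:R<<}, which forces one to carry sharp rather than merely asymptotic binomial bounds all the way through the optimization.
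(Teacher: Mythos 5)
Your derivation of the identity \eqref{eq:} with remainder \eqref{eq:R_m}, the passage between the various forms of $A_m$, the coefficient sum \eqref{eq:sum ga_j}, and the crude bound \eqref{eq:R<} is essentially the paper's own argument: the same local Taylor expansion at each $\kk$, the same tensor/product structure of the functional $T$ reducing to the one-variable moment relations of \cite{euler-maclaurin-alt}, the same coordinate-by-coordinate ``telescoping'' rearrangement of $\sum_\kk\int_{\kk-\jj/2}^{\kk+\jj/2}$ into $\sum_{\iii=\0}^{\jj-\1}\int_{\iii-\jj/2}^{\nn-\1+\jj/2-\iii}$, and the same reparametrization $\bbe=2\iii-\jj+\1$ producing $\tau_{m,\jj}$. (For \eqref{eq:sum ga_j} the paper takes an indicator test function, while you factor the $p$-fold $\tau$-sum directly; both are fine.)

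Where you diverge is \eqref{eq:R<<}, and there the proposal has a real gap. The paper does \emph{not} treat the RHS of \eqref{eq:R<} as a coefficient of $\Phi(x)^p$; instead it applies the elementary estimate $\bigl(\sum_{r=1}^p|v_rj_r|\bigr)^{2m}\le p^{2m-1}\sum_{r=1}^p|v_rj_r|^{2m}$ to collapse the $p$-dimensional sum into a one-dimensional one, giving
$\tilde R_m\le\frac{M_{2m}p^{2m}}{2^{2m}(2m+1)!}\,c_{2m}\,c_0^{\,p-1}$
with $c_l:=\sum_j|\ga_{m,j}|j^{l+1}$, and then quotes two already-proved one-dimensional facts: $c_{2m}\le1.0331\pi\La_*^m m^{2m+1}$ (Proposition~4.4 of \cite{euler-maclaurin-alt}, which is where $\La_*$, $\ka$, and the constant $1.0331$ are actually established) and $c_0=2^{2m}/\binom{2m}{m}-1<\sqrt{\pi m}$ (a Watson-type inequality). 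So the hard analytic work lives in the cited one-variable proposition; the $p$-variate step is just H\"older. Your Cauchy/saddle-point route is not that argument: you would have to minimize $\xi^{-(2m+p)}\cosh^{2mp}(\xi/2)$ over $\xi$, which produces a transcendental equation $\tfrac\xi2\tanh\tfrac\xi2=\tfrac{2m+p}{2mp}$ in which $m$ and $p$ are entangled, and it is not shown (and is not obvious) that the resulting bound can be put into the exact shape $(\pi m)^{(p+1)/2}(2m+1)!^{-1}(\ka pm)^{2m}$ with $\ka=\sqrt{\La_*/4}$. In particular the constant $\ka$ arises from $\max_t\La(t)$ attached to the $j^{2m+1}$-weighted one-variable sum, not from any saddle of your $\Phi$; and the replacement $\tfrac{e^{j\xi}-1}{j}\le e^{j\xi}$ discards a factor of order $j\sim tm$ at the saddle, which is exactly the size of the polynomial prefactor you need to recover. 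You flag this yourself, but flagging it does not close the gap: as written, \eqref{eq:R<<} is not proved. The fix is to replace the generating-function step by H\"older's inequality and then invoke the one-dimensional bound from \cite[Proposition~4.4]{euler-maclaurin-alt} together with $2^{2m}/\binom{2m}{m}<\sqrt{\pi m+1}$.
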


Recall the convention that the sum of an empty family is $0$. 
In particular, if $\wedge\nn=0$, then 
$\sum_{\kk=\0}^{\nn-\1}f(\kk)=0=A_m=R_m$. 

Also, it is clear that $R_m=0$ if the function $f$ is any polynomial of degree at most $2m-1$.  

One may note here that, in each of the formulas \eqref{eq:A_m}, \eqref{eq:A_m,alt1}, and \eqref{eq:A_m,alt2}, the first expression is a linear combination of integrals of the form $\int_{-\lla}^{\nn-\1+\lla}$ for some $\lla\in\R^p$ with $|\lla|\le(m-2)\1/2$. So, provided that $\nn\ge(m-1)\1$, each of these integrals equals the Lebesgue integral of the function $f$ over the $p$-dimensional  interval $[-\lla,\nn-\1+\lla]$, symmetric about the point $(\nn-\1)/2$. 

In contrast, the second expression in each of the formulas \eqref{eq:A_m}, \eqref{eq:A_m,alt1}, and \eqref{eq:A_m,alt2} is a linear combination of integrals of the form $\int_{\lla}^{\nn+\lla}$ for some $\lla\in\R^p$; so, each of these integrals equals the Lebesgue integral of the function $f$ over the $p$-dimensional  interval $[\lla,\nn+\lla]$, whose endpoints differ by the vector $\nn$. This observation holds whether the condition $\nn\ge(m-1)\1$ holds ot not.

\begin{remark}\label{rem:vect}
As in \cite{euler-maclaurin-alt} in the special case of ordinary sums, here, instead of assuming that the function $f$ is real-valued, one may assume, more generally, that $f$ takes values in any normed space. In particular, one may allow $f$ to take values in the $q$-dimensional complex space $\CC^q$, for any natural $q$. 
An advantage of dealing with a vector-valued function 
(rather than separately with each of its coordinates) is that this way one has to compute the coefficients -- say $\tau_{m,\bbe}$ in \eqref{eq:A_m,alt2} 
-- only once, for all the components of the vector function.  
\hfill\qed
\end{remark}

\section{Application to summing (possibly divergent) multi-index series}\label{series}


Let us say that a function $F$ on $\R^p$ is an antiderivative of the function $f$ if 
\begin{equation*}
	F^{(\1)}=f. 
\end{equation*}
Clearly, this notion is a generalization of the corresponding notion for functions on $\R$. It is also clear that an antiderivative exists and can be obtained by taking the iterated indefinite integral \break 
$\int\dd x_1\cdots\int\dd x_p\,f(x_1,\dots,x_p)$. 


As usual, let $[p]:=\{1,\dots,p\}$. 
For each set $J\subseteq[p]$, let $|J|$ denote the cardinality of $J$, and also let 
$\1_J:=(\ii\{1\in J\},\dots,\ii\{p\in J\})$.   
In particular, $\1_{[p]}=\1$ and $\1_{\emptyset}=\0$. 

The alternative summation formula presented in Theorem~\ref{th:} can be used for summing (possibly divergent) multi-index series, as follows.

\begin{theorem}\label{prop:series}
Let $m_0$ be a natural number, and suppose that $m\ge m_0$. 
Let $F$ be any antiderivative of $f$. 
Suppose that 
\begin{equation}\label{eq:f^ to0}
	F^{(\aal)}(x)\underset{\vee\xx\to\infty}\longrightarrow0 \ 
	\ \text{for each\  $\aal\in\Z_+^p$\ with\  $\|\aal\|=2m_0$ }
\end{equation}
and the series 
\begin{equation}\label{eq:R unif}
	\sum_{\kk=\0}^{\infty\1} f^{(\aal)}(\kk+\uu) \ \text{converges uniformly in $\uu\in[-m\1/2,m\1/2]$\  
	for each\  $\aal\in\Z_+^p$\  with\  $\|\aal\|=2m$, }
\end{equation}
in the sense that $\sum_{\kk=\0}^{\nn-\1} f^{(\aal)}(\kk+\uu)$ converges uniformly as $\wedge\nn\to\infty$. 
Then 
\begin{equation}\label{eq:series}
\sum_{\kk\ge\0}^\Alt f(\kk):=\lim_{\wedge\nn\to\infty}	\Big(\sum_{\kk=\0}^{\nn-\1} f(\kk)-
\tA_{m_0,F}(\nn)\Big)=
	(-1)^p A^\emptyset_{m,F}(\0)-R_{m,f}(\infty), 
\end{equation}
where (cf.\ \eqref{eq:A_m}, \eqref{eq:A_m,alt1},  and \eqref{eq:A_m,alt2})   
\begin{equation}\label{eq:tA_m,F(n)}
	\tA_{m,F}(\nn):=\sum_{\emptyset\ne J\subseteq[p]}(-1)^{p-|J|}A^J_{m,F}(\nn), 
\end{equation}
\begin{align}
A^J_{m,F}(\nn):=&\sum_{\jj=\1}^{m\1}\ga_{m,\jj} 
\sum_{\iii=\0}^{\jj-\1}F(\nn\1_J-\1+\jj/2-\iii) \label{eq:G} \\ 
	=&\sum_{\bbe=(1-m)\1}^{(m-1)\1}\tau_{m,\1+|\bbe|}\,F(\nn\1_J-\1/2-\bbe/2) 
	\label{eq:G_m,alt1} 
	\\ 
	=& \sum_{\aal=\0}^{(m-1)\1}\tau_{m,\1+\aal}\,\sum_{\bbe\colon|\bbe|=\aal}\,F(\nn\1_J-\1/2-\bbe/2),   
	\label{eq:G_m,alt2}
\end{align}
and (cf.\ \eqref{eq:R_m}) 
\begin{equation}\label{eq:R_m,f}
	R_{m,f}(\infty):=\frac{m}{2^{2m+p-1}}\,\sum_{\|\aal\|=2m}\frac1{\aal!}
\,\int_0^1\dd s\,(1-s)^{2m-1}\int_{-\1}^{\1}\dd \vv\,\vv^\aal
	\sum_{\jj=\1}^{m\1}\ga_{m,\jj}\,\jj^{\aal+\1} \sum_{\kk=\0}^{\infty\1}f^{(\aal)}(\kk+s\jj\vv/2). 
\end{equation} 

If condition \eqref{eq:<M} holds for all $\nn\in\Z_+^p$, then one can replace $R_m$ in \eqref{eq:R<}--\eqref{eq:R<<} by $R_{m,f}(\infty)$, so that 
\begin{equation}
	|R_{m,f}(\infty)|\le M_{2m}\,\frac{1.0331(\pi m)^{(p+1)/2}}{(2m+1)!}\,(\ka pm)^{2m}.  
\label{eq:R<<,infty}
\end{equation} 

Looking, say, at the expression of $A^J_{m,F}(\nn)$ in \eqref{eq:G_m,alt2}, one may note that 
\begin{equation}\label{eq:A^empty}
	A^\emptyset_{m,F}(\0)=A^\emptyset_{m,F}(\nn)=A^J_{m,F}(\0)=
	\sum_{\aal=\0}^{(m-1)\1}\tau_{m,\1+\aal}\,\sum_{\bbe\colon|\bbe|=\aal}\,F(\bbe/2-\1/2) 
\end{equation}
for all $\nn\in\Z_+^p$ and $J\subseteq[p]$. 
\end{theorem}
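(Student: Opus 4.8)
The plan is to derive the statement from Theorem~\ref{th:} applied with the parameter $m$, after rewriting the integral approximation $A_m$ there in terms of boundary values of the antiderivative $F$. \emph{Step 1: a fundamental‑theorem‑of‑calculus decomposition of $A_m$.} The first expression for $A_m$ in \eqref{eq:A_m} is a linear combination, with coefficients $\ga_{m,\jj}$, of integrals $\int_{\uu}^{\nn+\uu}\dd\xx\,f(\xx)$ with $\uu=-\1+\jj/2-\iii$. Since $f=F^{(\1)}=\partial_1\cdots\partial_pF$, integrating the variables out one at a time gives $\int_{\uu}^{\nn+\uu}\dd\xx\,f(\xx)=\sum_{J\subseteq[p]}(-1)^{p-|J|}F(\nn\1_J+\uu)$; substituting and regrouping by $J$ yields $A_m=\sum_{J\subseteq[p]}(-1)^{p-|J|}A^J_{m,F}(\nn)$ with $A^J_{m,F}(\nn)$ as in \eqref{eq:G}, and the equivalent forms \eqref{eq:G_m,alt1} and \eqref{eq:G_m,alt2} follow from the same identity between the $\ga$'s and the $\tau$'s used to pass from \eqref{eq:A_m} to \eqref{eq:A_m,alt1}--\eqref{eq:A_m,alt2} in Theorem~\ref{th:}. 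The term $J=\emptyset$ does not depend on $\nn$ (as $\nn\1_\emptyset=\0$), which gives \eqref{eq:A^empty}; hence $A_m=\tA_{m,F}(\nn)+(-1)^pA^\emptyset_{m,F}(\0)$, and Theorem~\ref{th:} rewrites as
\begin{equation*}
\sum_{\kk=\0}^{\nn-\1}f(\kk)-\tA_{m,F}(\nn)=(-1)^pA^\emptyset_{m,F}(\0)-R_m.\tag{$\star$}
\end{equation*}

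\emph{Step 2: the limit of the remainder.} For $s\in[0,1]$, $\1\le\jj\le m\1$ and $\vv\in[-\1,\1]$ one has $s\jj\vv/2\in[-m\1/2,m\1/2]$, so by \eqref{eq:R unif} the inner sum $\sum_{\kk=\0}^{\nn-\1}f^{(\aal)}(\kk+s\jj\vv/2)$ in \eqref{eq:R_m} tends, as $\wedge\nn\to\infty$, to $\sum_{\kk=\0}^{\infty\1}f^{(\aal)}(\kk+s\jj\vv/2)$ uniformly in $s$, $\vv$ and $\jj$, for each of the finitely many $\aal$ with $\|\aal\|=2m$. Since the remaining integrations run over the bounded set $[0,1]\times[-\1,\1]$ against the bounded kernel $(1-s)^{2m-1}\vv^\aal$ and the outer sums are finite, one passes to the limit termwise and obtains $R_m\to R_{m,f}(\infty)$. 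If, moreover, \eqref{eq:<M} holds for all $\nn$, the bound $\big|\sum_{\kk=\0}^{\infty\1}f^{(\aal)}(\kk+\uu)\big|\le M_{2m}$ passes to the limit, and the estimate \eqref{eq:R<}--\eqref{eq:R<<} (which uses only this bound together with elementary estimates of the coefficients and of the integrals over $s$ and $\vv$) applies verbatim with $\sum_{\kk=\0}^{\infty\1}$ in place of $\sum_{\kk=\0}^{\nn-\1}$, yielding \eqref{eq:R<<,infty}.

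\emph{Step 3: passing from $m$ to $m_0$.} Here I must show $\tA_{m,F}(\nn)-\tA_{m_0,F}(\nn)\to0$ as $\wedge\nn\to\infty$; by \eqref{eq:tA_m,F(n)} it is enough to prove $A^J_{m,F}(\nn)-A^J_{m_0,F}(\nn)\to0$ for each \emph{nonempty} $J\subseteq[p]$. Using \eqref{eq:G_m,alt1} and $\tau_{\mu,\1+|\bbe|}=\prod_r\tau_{\mu,1+|\be_r|}$, one has $A^J_{\mu,F}(\nn)=\big(\bigotimes_{r=1}^pD^{(r)}_\mu\big)[F]$ evaluated at $\nn\1_J$, where $D^{(r)}_\mu$ acts on the $r$‑th argument of a function by $g\mapsto\sum_{\be=1-\mu}^{\mu-1}\tau_{\mu,1+|\be|}\,g(\,\cdot\,-1/2-\be/2)$; telescoping the tensor products,
\begin{equation*}
\textstyle\bigotimes_{r=1}^pD^{(r)}_m-\bigotimes_{r=1}^pD^{(r)}_{m_0}=\sum_{r=1}^p\big(\bigotimes_{s<r}D^{(s)}_m\big)\otimes\big(D^{(r)}_m-D^{(r)}_{m_0}\big)\otimes\big(\bigotimes_{s>r}D^{(s)}_{m_0}\big).
\end{equation*}
The one‑variable operator $D^{(r)}_m-D^{(r)}_{m_0}$, written as $g\mapsto\sum_\ell w_\ell\,g(\,\cdot\,+a_\ell)$ with $|a_\ell|\le m$, has vanishing moments $\sum_\ell w_\ell a_\ell^t=0$ for $0\le t\le2m_0-1$: the one‑variable Alt formula (the case $p=1$ of Theorem~\ref{th:}, i.e.\ \cite[Theorem 3.1]{euler-maclaurin-alt}) is exact when $\phi'$ is a polynomial of degree $\le2m_0-1$, for both parameters $m$ and $m_0$ (here $m\ge m_0$ is used), so $D_m[\phi](y)-D_{m_0}[\phi](y)$ is independent of the integer $y$ and, being a polynomial in $y$, constant for every polynomial $\phi$ of degree $\le2m_0$, which forces those relations. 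By Taylor's formula with integral remainder, $(D^{(r)}_m-D^{(r)}_{m_0})[g](y)=\sum_\ell\frac{w_\ell a_\ell^{2m_0}}{(2m_0-1)!}\int_0^1(1-\theta)^{2m_0-1}g^{(2m_0)}(y+\theta a_\ell)\,\dd\theta$. Inserting this in the $r$‑th slot applied to $F$ (so that the single‑variable $2m_0$‑th derivative becomes $F^{(2m_0\1_{\{r\}})}$, the partial derivative of $F$ of order $2m_0$ in the $r$‑th variable) and carrying out the finitely many point‑evaluations prescribed by the other slots, one sees that $A^J_{m,F}(\nn)-A^J_{m_0,F}(\nn)$ is a finite linear combination of quantities $\int_0^1(1-\theta)^{2m_0-1}F^{(2m_0\1_{\{r\}})}(\ww)\,\dd\theta$ in which every coordinate of $\ww$ indexed by $J$ equals $n_s$ plus a perturbation bounded uniformly in $\theta$. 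Since $J\ne\emptyset$, at least one such coordinate occurs, so $\vee\ww\to\infty$ uniformly in $\theta\in[0,1]$ as $\wedge\nn\to\infty$; as $\|2m_0\1_{\{r\}}\|=2m_0$, hypothesis \eqref{eq:f^ to0} then forces $F^{(2m_0\1_{\{r\}})}(\ww)\to0$ uniformly in $\theta$, whence each such integral, and therefore $A^J_{m,F}(\nn)-A^J_{m_0,F}(\nn)$ and $\tA_{m,F}(\nn)-\tA_{m_0,F}(\nn)$, tends to $0$.

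\emph{Step 4 and the main obstacle.} Writing $\sum_{\kk=\0}^{\nn-\1}f(\kk)-\tA_{m_0,F}(\nn)=\big(\sum_{\kk=\0}^{\nn-\1}f(\kk)-\tA_{m,F}(\nn)\big)+\big(\tA_{m,F}(\nn)-\tA_{m_0,F}(\nn)\big)$ and combining $(\star)$ with Steps 2--3, the right‑hand side tends, as $\wedge\nn\to\infty$, to $(-1)^pA^\emptyset_{m,F}(\0)-R_{m,f}(\infty)$, which is \eqref{eq:series}; in particular the limit defining $\sum_{\kk\ge\0}^\Alt f(\kk)$ exists. Steps 1, 2 and 4 are routine; the main obstacle is Step 3, specifically (i) identifying $2m_0$ as the exact order of moment cancellation of $D^{(r)}_m-D^{(r)}_{m_0}$, and (ii) observing that nonemptiness of $J$ — irrespective of whether the differentiation direction $r$ lies in $J$ — is exactly what forces $\vee\ww\to\infty$, so that the decay hypothesis \eqref{eq:f^ to0} can be brought to bear.
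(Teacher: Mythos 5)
Your proposal is correct and reaches the conclusion, but the crucial Step~3 takes a genuinely different route from the paper, so the comparison is worth spelling out. Steps~1, 2, and~4 are essentially the paper's own argument: the paper likewise uses the multidimensional FTC (stated there as Lemma~\ref{lem:FTC}) to turn $A_m$ into $\sum_{J\subseteq[p]}(-1)^{p-|J|}A^J_{m,F}(\nn)$, passes from $R_m$ to $R_{m,f}(\infty)$ by uniform convergence via \eqref{eq:R unif}, and assembles the pieces exactly as in your $(\star)$.

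In Step~3, however, the paper and you split $A^J_{m,F}(\nn)-A^J_{m_0,F}(\nn)$ differently. The paper introduces, for each $J$, the degree-$(2m_0-1)$ Taylor polynomial $T_J$ of the \emph{multivariate} function $F$ at $\nn\1_J-\1$, writes $F=T_J+(F-T_J)$, shows $A^J_{m,T_J}(\nn)=A^J_{m_0,T_J}(\nn)$ by reducing to one-variable monomials and invoking the one-dimensional exactness identity from \cite{euler-maclaurin-alt}, and then kills $A^J_{\mu,F-T_J}(\nn)$ via the multivariate Taylor remainder and \eqref{eq:f^ to0}. You instead express $A^J_{\mu,F}(\nn)$ as a tensor product of one-variable shift operators $D^{(r)}_\mu$, telescope $\bigotimes_r D^{(r)}_m-\bigotimes_r D^{(r)}_{m_0}$, extract the moment-vanishing property of the one-variable difference $D^{(r)}_m-D^{(r)}_{m_0}$ up to order $2m_0-1$ from the same one-dimensional exactness fact, and apply the one-variable Taylor integral remainder in a single slot. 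Both arguments pivot on the same one-dimensional exactness (degree $<2m$), on the nonemptiness of $J$ to force $\vee\ww\to\infty$, and on \eqref{eq:f^ to0}; but your telescoping produces only the \emph{pure} partials $F^{(2m_0\1_{\{r\}})}$, whereas the paper's multivariate Taylor remainder involves all mixed partials of order $2m_0$. Your route therefore actually proves the conclusion under the formally weaker hypothesis that \eqref{eq:f^ to0} holds only for $\aal$ of the form $2m_0\1_{\{r\}}$, at the cost of a somewhat heavier bookkeeping in the telescoping step; the paper's single Taylor expansion is more compact and stays uniform over all $p$ coordinates at once.

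Two minor points of bookkeeping you should tighten if this is written out in full: (i) when you deduce the vanishing moments from ``$(D_m-D_{m_0})[\phi]$ is constant for $\deg\phi\le 2m_0$'', you should note that this constancy is first obtained only at integer arguments from the Alt exactness and then upgraded to all real arguments because the difference is a polynomial of degree $\le 2m_0$; (ii) the constant $C_0:=(D_m-D_{m_0})[\phi](0)$ need not vanish, but it does not matter, since the Taylor-with-integral-remainder formula you use is an identity valid for all $2m_0$-times differentiable $g$ once the moments up to order $2m_0-1$ vanish, regardless of $C_0$.
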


The limit $\sum_{\kk\ge\0}^\Alt f(\kk)$ in \eqref{eq:series} may be referred to as the (generalized) sum of the possibly divergent multi-index series $\sum_{\kk=\0}^{\infty\1} f(\kk)$ by means of the Alt formula \eqref{eq:}. 

Theorem~\ref{prop:series} is a multi-index extension of Proposition~5.1 in \cite{euler-maclaurin-alt}.


\bigskip 

To compute the generalized sum 
$\sum_{\kk\ge\0}^\Alt f(\kk)$ 
effectively, one has to ensure that the remainder 
$R_{m,f}(\infty)$ 
can be made arbitrarily small. This can be done as follows. 

For any function $h\colon\R^p\to\R$ and any $\cc\in\R^p$, let $h_\cc$ denote the $\cc$-shift of $h$ defined by the formula 
\begin{equation*} 
	h_\cc(\xx):=h(\xx+\cc)
\end{equation*}
for all $\xx\in\R^p$. 
Note that, if $F$ is an antiderivative of $f$, then $F_\cc$ is an antiderivative of $f_\cc$. 

\begin{theorem}\label{th:c}
Suppose that the conditions of Theorem~\ref{prop:series} hold. Take any $\cc\in\Z_+^p$. Then 
\begin{equation*}
	\sum_{\kk\ge\0}^\Alt f(\kk)=\sum_{\kk=\0}^{\cc-\1}f(\kk)-\tA_{m,F}(\cc)-R_{m,f,\cc}(\infty), 
\end{equation*}
where 
\begin{equation}\label{eq:R_c}
	R_{m,f,\cc}(\infty):=-\sum_{\emptyset\ne J\subseteq[p]}(-1)^{p-|J|}R_{m,f_{\cc\1_J}}(\infty)
\end{equation}
(cf.\ \eqref{eq:tA_m,F(n)}). 
\end{theorem}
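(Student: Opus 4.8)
The plan is to reduce Theorem~\ref{th:c} to Theorem~\ref{prop:series} by applying the latter to the shifted function $f_\cc$ and reconciling the bookkeeping. First I would record the elementary ``shift/splitting'' identity for the finite sums: for any $\nn\ge\cc$ (coordinatewise) one has
\begin{equation*}
	\sum_{\kk=\0}^{\nn-\1} f(\kk)=\sum_{\kk=\0}^{\cc-\1}f(\kk)+\sum_{\kk=\0}^{\nn-\cc-\1}f_\cc(\kk)+\big(\text{boundary terms}\big),
\end{equation*}
where the boundary terms come from the ``mixed'' sub-boxes in which some coordinates range below $c_r$ and others range above; these are exactly indexed by the nonempty proper subsets $J\subsetneq[p]$, and each such term is itself an iterated sum that, after the same limiting procedure $\wedge\nn\to\infty$ applied coordinate-by-coordinate, contributes a lower-dimensional Alt-sum of the partially shifted function $f_{\cc\1_J}$. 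The inclusion–exclusion structure here is the reason the signs $(-1)^{p-|J|}$ appear, mirroring \eqref{eq:tA_m,F(n)}.

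Next I would pass to the limit $\wedge\nn\to\infty$ in this decomposition. By Theorem~\ref{prop:series} (applied to $f$, and then to each $f_{\cc\1_J}$, noting that the hypotheses \eqref{eq:f^ to0} and \eqref{eq:R unif} are stable under integer shifts since $(f_{\cc\1_J})^{(\aal)}=(f^{(\aal)})_{\cc\1_J}$ and $\cc\1_J\in\Z_+^p$), every piece converges. On the left one gets $\sum_{\kk\ge\0}^\Alt f(\kk)$ by definition \eqref{eq:series}; the first finite piece is $\sum_{\kk=\0}^{\cc-\1}f(\kk)$, which is constant in $\nn$; the full-shift piece converges, again by \eqref{eq:series} applied to $f_\cc$, to $(-1)^pA^\emptyset_{m,F_\cc}(\0)-R_{m,f_\cc}(\infty)$ together with a copy of $\tA_{m_0,F_\cc}$ that needs to be matched against the subtracted approximant; and each mixed $J$-piece converges to a term built from $R_{m,f_{\cc\1_J}}(\infty)$ and the corresponding lower-dimensional $A$-terms. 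I would then collect the non-remainder terms and check, using the explicit formula \eqref{eq:G} for $A^J_{m,F}$ together with the ``antiderivative telescoping'' already used to prove Theorem~\ref{prop:series} (the finite-difference identity turning $\sum_{\kk=\0}^{\nn-\1}\int_{\cdot}^{\cdot}$ into values of $F$ at shifted corners), that they reassemble precisely into $-\tA_{m,F}(\cc)$ as displayed in \eqref{eq:tA_m,F(n)}. The key computational identity here is that $A^J_{m,F_{\cc\1_{J'}}}(\nn)$ is again of the form $A^{J\triangle(\cdots)}_{m,F}$ evaluated at a shifted argument, because $F_{\cc\1_{J'}}(\yy)=F(\yy+\cc\1_{J'})$ and the $J$-restriction operator $\nn\1_J$ commutes appropriately with integer translations.

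Finally, the remainder terms: the shifted applications of Theorem~\ref{prop:series} produce the family $\{R_{m,f_{\cc\1_J}}(\infty)\}_{\emptyset\ne J\subseteq[p]}$ with signs dictated by the same inclusion–exclusion, and I would simply \emph{define} their signed combination to be $R_{m,f,\cc}(\infty)$, which is exactly \eqref{eq:R_c}; consistency with the $J=[p]$ term (the genuine ``$R_{m,f_\cc}(\infty)$'') and with the convention that the empty-sum/zero-dimensional remainders vanish is then automatic from the convention recalled after Theorem~\ref{th:}. I expect the main obstacle to be purely organizational rather than analytic: carefully setting up the $p$-fold inclusion–exclusion for the box $[\0,\nn-\1]$ split at $\cc$, tracking which coordinates are ``shifted and finite'' versus ``shifted and sent to infinity'' in each sub-box, and verifying that the resulting $A$-contributions telescope into $\tA_{m,F}(\cc)$ with the correct signs. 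Once the combinatorial indexing is pinned down, each individual limit is justified verbatim by Theorem~\ref{prop:series}, and no new estimates are needed.
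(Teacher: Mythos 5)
Your broad plan --- split the box at $\cc$, pass to the limit coordinatewise, and apply Theorem~\ref{prop:series} to the shifted functions $f_{\cc\1_J}$ --- is in the right spirit, but the specific decomposition you write down leads away from the theorem rather than toward it. The ``mixed'' boundary pieces in your splitting of $\sum_{\kk=\0}^{\nn-\1}f(\kk)$ are sums over boxes of the form $\prod_{r\notin J}[0,c_r-1]\times\prod_{r\in J}[c_r,n_r-1]$: some coordinates stay finite, and only the coordinates in $J$ go to infinity. As you yourself say, the limit would be a \emph{lower-dimensional} Alt-sum. But Theorem~\ref{prop:series} is a $p$-dimensional statement, and the target formula in Theorem~\ref{th:c} (via \eqref{eq:R_c}) is expressed entirely in terms of the $p$-dimensional remainders $R_{m,f_{\cc\1_J}}(\infty)$ for the functions $f_{\cc\1_J}\colon\R^p\to\R$. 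To apply Theorem~\ref{prop:series} to those pieces you would need a version of it for partially-summed, lower-dimensional boxes, which the paper does not prove and which is not the same object. The paper instead begins from the \emph{other} direction: it expands the finite sum $\sum_{\kk=\0}^{\cc-\1}f(\kk)$ by inclusion--exclusion against the indicator identity $\prod_r\big(\ii\{k_r\le n_r+c_r-1\}-\ii\{c_r\le k_r\le n_r+c_r-1\}\big)=\ii\{\kk\le\cc-\1\}$, which yields
\begin{equation*}
\sum_{\kk=\0}^{\cc-\1}f(\kk)=\sum_{J\subseteq[p]}(-1)^{|J|}\sum_{\kk=\cc\1_J}^{\nn+\cc-\1}f(\kk),
\end{equation*}
and here \emph{every} piece $\sum_{\kk=\cc\1_J}^{\nn+\cc-\1}f(\kk)=\sum_{\kk=\0}^{\nn+\cc-\cc\1_J-\1}f_{\cc\1_J}(\kk)$ is a full $p$-dimensional tail (all coordinates $\to\infty$), so Theorem~\ref{prop:series} applies to each of them verbatim with the shifted function $f_{\cc\1_J}$. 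Rearranging and subtracting the $\tA_{m_0}$ approximants then produces only $p$-dimensional Alt-sums of $f$ and of the $f_{\cc\1_J}$'s on the right.

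The second place you under-estimate the work is the ``telescoping'' of the $A$-contributions into $-\tA_{m,F}(\cc)$. In the paper this is not automatic: after rearranging, there is a leftover correction term $\RR$ (a linear combination of differences $\tA_{m_0,F}(\nn+\cc)-\tA_{m_0,F_{\cc\1_J}}(\nn+\cc-\cc\1_J)$), and proving $\RR=0$ requires a genuine combinatorial argument --- reindexing the double sum over $J$ and $K$, exhibiting a bijection $J\mapsto J\cap K$, and invoking $\sum_{I\subseteq K}(-1)^{|I|}=0$ (which in turn the paper extracts from Lemma~\ref{lem:FTC} with $F\equiv1$). Your phrase ``check \dots that they reassemble precisely into $-\tA_{m,F}(\cc)$'' is exactly the step that needs this lemma; it is organizational, as you guessed, but it is a real step, and with your decomposition the indexing would be different (and arguably messier, since some of the terms are genuinely lower-dimensional). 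So as written the proposal has a gap: the chosen inclusion--exclusion does not reduce cleanly to $p$-dimensional instances of Theorem~\ref{prop:series}, and the cancellation of the $\tA$-terms is asserted rather than proved.
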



Under the conditions of Theorem~\ref{prop:series}, the remainder $R_{m,f,\cc}(\infty)$ can be made arbitrarily small by making  $\wedge\cc$ large enough. The price to pay for this will be the need to compute a possibly large partial sum $\sum_{\kk=\0}^{\cc-\1}f(\kk)$ of the series. 

Theorem~\ref{th:c} is a multi-index extension of Corollary~5.6 in \cite{euler-maclaurin-alt}.



\section{Application to sums over the integral points of integral lattice polytopes}\label{polytopes}

Let $P$ be an integral polytope in $\R^p$, that is, the convex hull of a finite subset of $\Z^p$. 
Suppose that $P$ is of full dimension, $p$. 
Let $V$ denote the set of all vertices (that is, extreme points) of $P$. 
%


By the main result of Haase~\cite{haase05}, 
for each $\vv\in V$ there exist a finite set $I_\vv$, a map $I_\vv\ni i\mapsto t_{\vv,i}\in\{0,1\}$, 
a map $I_\vv\ni i\mapsto A_{\vv,i}$ into the set of all nonsingular $p\times p$ matrices over $\Z$, and a map $I_\vv\ni i\mapsto J_{\vv,i}$ into the set of all subsets of the set $[p]:=\{1,\dots,p\}$ such that     
\begin{equation}\label{eq:haase}
	\lb P\rb=\sum_{\vv\in V}\sum_{i\in I_\vv}(-1)^{t_{\vv,i}}\lb C_{\vv,i}\rb, 
\end{equation} 
where $\lb\cdot\rb$ denotes the indicator/characteristic function,  
\begin{equation}\label{eq:C_v,i}
	C_{\vv,i}:=\vv+A_{\vv,i}\R^+_{J_{\vv,i}}=\{\vv+A_{\vv,i}\xx\colon\xx\in\R^+_{J_{\vv,i}}\},   
\end{equation}
$$\R^+_J:=\prod_{j\in[p]}\R^+_{
1-\lb J\rb(j)}
\quad\text{for}\quad J\subseteq[p],$$ 
and 
\begin{equation*}
	\R^+_\vp:=
	\begin{cases}
	(0,\infty)&\text{ if }\vp=0, \\ 
	[0,\infty)&\text{ if }\vp=1 
	\end{cases}
\end{equation*} 
(so that the closure of $C_{\vv,i}$ is a polyhedral cone, for each pair $(\vv,i)$). 
In the case when the polytope $P$ is simple, decomposition \eqref{eq:haase} was obtained earlier by Lawrence~\cite{lawrence}. To extend Lawrence's result, Haase used virtual infinitesimal deformations of vertices of $P$, identified with regular triangulations of the normal cones at the vertices. 

\begin{proposition}\label{prop:C-decomp}
Let $A$ be any nonsingular $p\times p$ matrix over $\Z$, and let $J$ be any subset of the set $[p]$. 
Then there exist a 
finite set $I$, a map $I\ni i\mapsto A_i$ into the set of all \emph{unimodular} $p\times p$ matrices over $\Z$, 
and a map $I\ni i\mapsto J_i$ into the set of all subsets of the set $[p]$ such that    
\begin{equation}\label{eq:C-decomp}
	\lb A\R^+_J\rb=\sum_{i\in I}\lb A_i\R^+_{J_i}\rb.  
\end{equation}
(Recall that a matrix is called unimodular if its determinant is $1$ or $-1$.)
\end{proposition}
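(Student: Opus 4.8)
The plan is to reduce the statement to a standard fact about unimodular triangulations of simplicial cones, applied one coordinate direction at a time. The key observation is that $A\R^+_J$ is a relatively open (or half-open) simplicial cone in $\R^p$: writing $A=(\aa_1\mid\dots\mid\aa_p)$ in terms of its columns $\aa_r\in\Z^p$, we have $A\R^+_J=\{\sum_{r=1}^p x_r\aa_r\colon x_r\in\R^+_{1-\lb J\rb(r)}\}$, the cone generated by the linearly independent integer vectors $\aa_1,\dots,\aa_p$, with the facet $\{x_r=0\}$ included iff $r\in J$. Since the $\aa_r$ span $\Z^p$ as a rational vector space but need not generate it as a lattice, this cone is in general not unimodular. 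First I would recall the classical result that any simplicial rational cone admits a finite triangulation into unimodular simplicial subcones whose generators are among the lattice points of the original cone; one clean way is induction on $|\det A|$, picking a lattice point in the fundamental half-open parallelepiped $A[0,1)^p$ distinct from $0$ and using it to subdivide, each piece having strictly smaller determinant, the base case $|\det A|=1$ being trivial.

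The remaining, and genuinely fiddly, part is bookkeeping the boundary structure so that the decomposition is an \emph{exact} identity of indicator functions, not merely an identity up to lower-dimensional pieces. Here I would proceed by induction on $p-|J|$, the number of \emph{excluded} facets. If $J=[p]$ the cone $A\R^+_J$ is closed, and one can invoke the triangulation above together with the fact that a triangulation of a closed simplicial cone, via inclusion–exclusion over faces, yields an exact identity $\lb A\R^+_{[p]}\rb=\sum_i\lb A_i\R^+_{J_i}\rb$ with the $A_i$ unimodular and the $J_i\subseteq[p]$ recording which facets of each small cone survive as faces of the big one (interior facets are excluded, i.e.\ the corresponding coordinate index is dropped from $J_i$). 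If some facet, say $\{x_p=0\}$, is excluded, I would write $\lb A\R^+_J\rb=\lb A\R^+_{J\cup\{p\}}\rb-\lb A\R^+_{J\cup\{p\}}\cap\{x_p=0\}\rb$; the first term is handled by the inductive hypothesis (or the $J=[p]$ case once all facets are restored), and the second is the indicator of the cone $A'\R^+_{J'}$ in the hyperplane $\{x_p=0\}\cong\R^{p-1}$, to which one applies the $(p-1)$-dimensional version of the statement and then re-embeds, again by induction. Throughout, one must check that matrices remain integer and unimodular under these operations — in particular that the lower-dimensional unimodular matrices extend to unimodular $p\times p$ matrices by appending the relevant standard basis vector — which is routine linear algebra.

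The main obstacle I expect is precisely this exactness of the indicator-function identity along the boundary: the triangulation result is well known, but most references state it for closed cones or only up to sets of measure zero, whereas \eqref{eq:C-decomp} as used in the sequel presumably needs it on the nose, with the half-open combinatorics of the $J_i$ tracked correctly. Managing the inclusion–exclusion over the faces of a simplicial-cone triangulation so that every relatively open face of $A\R^+_J$ is covered exactly once — and no face outside $\R^+_J$ is covered at all — is where the care goes. A convenient device is to pass to the level of half-open decompositions à la Köppe–Verdoolaege (or Stanley's half-open triangulations): any triangulation of a simplicial cone can be made half-open so that the relatively open cones in the decomposition partition the (appropriately half-open) original cone, which makes the indicator identity immediate and bypasses inclusion–exclusion altogether. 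I would most likely present the argument in that form.
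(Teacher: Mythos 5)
Your final, preferred route---subdivide the cone by a lattice point $\ww$ in the half-open fundamental parallelepiped $A[0,1)^p$, recurse on $|\det A|$, and handle the boundary combinatorics via a half-open decomposition so that the subcones partition $C$ on the nose---is precisely what the paper does, just worked out from scratch rather than cited. The paper's Lemma~\ref{lem:C-decomp} states explicit combinatorial conditions \eqref{rest}--\eqref{=1} on the half-openness pattern $\vp_{ij}$ of each subcone $C_i$ (obtained by replacing column $\aa_i$ of $A$ by $\ww$) under which $\lb C\rb=\sum_{i\in[k]}\lb C_i\rb$ holds exactly as an indicator identity, and Lemma~\ref{lem:eps} shows such a pattern always exists, essentially by totally ordering the indices and setting $\vp_{ij}=1$ whenever $i<j$ within $[k]$. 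You correctly flagged the boundary bookkeeping as the crux, and your instinct to bypass inclusion--exclusion with a half-open device is the right one. One note on your abandoned first alternative (induction on $p-|J|$ via $\lb A\R^+_J\rb=\lb A\R^+_{J\cup\{p\}}\rb-\lb A\R^+_{J\cup\{p\}}\cap\{x_p=0\}\rb$): the second term is the indicator of a lower-dimensional set and enters with a negative sign, whereas \eqref{eq:C-decomp} must be an unsigned sum of indicators of full-dimensional cones $A_i\R^+_{J_i}$ with nonsingular $p\times p$ integer matrices $A_i$; so that route does not directly yield a decomposition of the required form, and your pivot to the half-open approach was the right call.
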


 
Thus, one can strengthen the statement on the decomposition \eqref{eq:haase} as follows: 

\begin{corollary}\label{cor:decomp}
One may assume that all the matrices $A_{\vv,i}$ in \eqref{eq:haase}--\eqref{eq:C_v,i} are unimodular.
\end{corollary}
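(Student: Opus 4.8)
The plan is to combine the Haase decomposition \eqref{eq:haase}--\eqref{eq:C_v,i} with Proposition~\ref{prop:C-decomp} in a straightforward substitution step. First I would fix a vertex $\vv\in V$ and an index $i\in I_\vv$, and look at the corresponding cone-translate $C_{\vv,i}=\vv+A_{\vv,i}\R^+_{J_{\vv,i}}$ appearing in \eqref{eq:haase}. The matrix $A_{\vv,i}$ is nonsingular over $\Z$ but need not be unimodular. Applying Proposition~\ref{prop:C-decomp} to the pair $(A,J)=(A_{\vv,i},J_{\vv,i})$ yields a finite index set $I=I_{\vv,i}$, unimodular matrices $A_{\vv,i,\iota}$ ($\iota\in I_{\vv,i}$), and subsets $J_{\vv,i,\iota}\subseteq[p]$ with
\begin{equation*}
	\lb A_{\vv,i}\R^+_{J_{\vv,i}}\rb=\sum_{\iota\in I_{\vv,i}}\lb A_{\vv,i,\iota}\R^+_{J_{\vv,i,\iota}}\rb.
\end{equation*}

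Next I would translate this identity by $\vv$. Since translation by a fixed vector is a bijection of $\R^p$, one has $\lb\vv+S\rb(\xx)=\lb S\rb(\xx-\vv)$ for any set $S$, so the displayed identity gives
\begin{equation*}
	\lb C_{\vv,i}\rb=\lb\vv+A_{\vv,i}\R^+_{J_{\vv,i}}\rb=\sum_{\iota\in I_{\vv,i}}\lb\vv+A_{\vv,i,\iota}\R^+_{J_{\vv,i,\iota}}\rb.
\end{equation*}
Each set $\vv+A_{\vv,i,\iota}\R^+_{J_{\vv,i,\iota}}$ is again of the form $\vv'+A'\R^+_{J'}$ with $\vv'=\vv\in\Z^p$, $A'=A_{\vv,i,\iota}$ unimodular over $\Z$, and $J'=J_{\vv,i,\iota}\subseteq[p]$, exactly the shape required in \eqref{eq:C_v,i}. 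I would then substitute this expansion into \eqref{eq:haase}, obtaining
\begin{equation*}
	\lb P\rb=\sum_{\vv\in V}\sum_{i\in I_\vv}\sum_{\iota\in I_{\vv,i}}(-1)^{t_{\vv,i}}\lb\vv+A_{\vv,i,\iota}\R^+_{J_{\vv,i,\iota}}\rb,
\end{equation*}
and finally re-index the nested double sum $\sum_{i\in I_\vv}\sum_{\iota\in I_{\vv,i}}$ as a single sum over the disjoint union $\widetilde I_\vv:=\bigsqcup_{i\in I_\vv}I_{\vv,i}$, carrying along the sign $t_{\vv,i}$ (constant on each fiber of the new index over the old $i$) and the data $(A_{\vv,i,\iota},J_{\vv,i,\iota})$. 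This produces a decomposition of the form \eqref{eq:haase}--\eqref{eq:C_v,i} in which every matrix is unimodular, which is the assertion of the corollary.

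This argument is essentially bookkeeping once Proposition~\ref{prop:C-decomp} is in hand, so there is no genuine obstacle; the only point that needs a line of care is the translation step, namely checking that passing from a cone $A\R^+_J$ to its $\vv$-translate $\vv+A\R^+_J$ commutes with the decomposition \eqref{eq:C-decomp} — but this is immediate from linearity of the indicator-function identity under the affine change of variables $\xx\mapsto\xx-\vv$, together with the fact that $\vv\in\Z^p$ so the translated cones still have their apex at a lattice point. One should also remark that unimodular matrices are in particular nonsingular over $\Z$, so the new decomposition is indeed a special case of (hence compatible with) the original form of Haase's theorem.
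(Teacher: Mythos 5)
Your argument is exactly the bookkeeping the paper intends (the word ``Thus'' before Corollary~\ref{cor:decomp} signals that it is immediate from Proposition~\ref{prop:C-decomp}): apply the proposition to each $(A_{\vv,i},J_{\vv,i})$, translate by $\vv$, substitute into \eqref{eq:haase}, and re-index over $\bigsqcup_{i\in I_\vv}I_{\vv,i}$ while carrying along the sign $(-1)^{t_{\vv,i}}$. This matches the paper's approach; the paper simply omits these details as routine.
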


A similar decomposition, but with 
polyhedral cones of lower dimensions, was obtained in \cite{barv94}.

The following corollary is almost immediate from Theorem~\ref{th:} and Corollary~\ref{cor:decomp}. 

\begin{corollary}\label{cor:poly}
Suppose that 
the function $f$ is compactly supported. Then 
\begin{equation}\label{eq:poly}
	\sum_{\kk\in P\cap\Z^p}f(\kk)
	=A_m(f,P)-R_m(f,P),
\end{equation}
where 
\begin{align}\label{eq:A_m(f,P)}
	A_m(f,P) :=\sum_{\bbe=(1-m)\1}^{(m-1)\1}\tau_{m,\1+|\bbe|}\,
	\sum_{\vv\in V}(-1)^{t_\vv}\sum_{i\in I_\vv}
\	\;\int\limits_{C_{\vv,i}+A_{\vv,i}(\1_{J_{\vv,i}}-(\1+\bbe)/2)}\dd\xx f(\xx)
\end{align} 
is the integral approximation to the sum $\sum\limits_{\kk\in P\cap\Z^p}f(\kk)$ 
and 
$R_m(f,P)$ is the remainder given by the formula 
\begin{align*}
	&R_m(f,P):= \\ 
	&\frac{m}{2^{2m+p-1}}\,\sum_{\|\aal\|=2m}\frac1{\aal!}
\,\int_0^1\dd s\,(1-s)^{2m-1}\int_{-\1}^{\1}\dd \uu\,\uu^\aal
	\sum_{\jj=\1}^{m\1}\ga_{m,\jj}\,\jj^{\aal+\1} 
	\sum_{\vv\in V}\sum_{i\in I_\vv}(-1)^{t_\vv} 
	\sum_{\kk\ge\0}g_{\vv,i}^{(\aal)}(\kk+\1_{J_{\vv,i}}+s\jj\uu/2),  
\end{align*}
with 
\begin{equation*}
	g_{\vv,i}(\yy):=f(\vv+A_{\vv,i}\yy)
\end{equation*}
for $\yy\in\R^p$. 
If $M_{2m}$ is a real number such that 
\begin{equation*}
	\Big|\sum_{\vv\in V}\sum_{i\in I_\vv}(-1)^{t_\vv} 
	\sum_{\kk\ge\0}g_{\vv,i}^{(\aal)}(\kk+\uu)\Big|\le M_{2m}\quad\text{whenever}\quad  
\|\aal\|=2m 
	\quad\text{and}\quad |\uu|\le(\tfrac m2+1)\1, 
\end{equation*}
then 
\begin{align*}
	|R_m(f,P)|  
	\le 
M_{2m}\,\frac{1.0331(\pi m)^{(p+1)/2}}{(2m+1)!}\,(\ka pm)^{2m}, 
\end{align*}
where $\ka$ is as in \eqref{eq:ka}. 
\end{corollary}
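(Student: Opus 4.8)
The plan is to reduce the sum over $P\cap\Z^p$ to a signed combination of sums over unimodularly-transformed orthants, and then apply Theorem~\ref{th:} (in its limiting form, Theorem~\ref{prop:series}) to each such orthant sum. First I would invoke Corollary~\ref{cor:decomp} to write $\lb P\rb=\sum_{\vv\in V}(-1)^{t_\vv}\sum_{i\in I_\vv}\lb C_{\vv,i}\rb$ with every $A_{\vv,i}$ unimodular (absorbing the per-$i$ sign $t_{\vv,i}$ into $t_\vv$ is a minor bookkeeping point; one may also simply keep the signs $(-1)^{t_{\vv,i}}$ throughout and relabel at the end). Summing $f$ against this identity of indicator functions gives
\begin{equation*}
	\sum_{\kk\in P\cap\Z^p}f(\kk)=\sum_{\vv\in V}(-1)^{t_\vv}\sum_{i\in I_\vv}\sum_{\kk\in C_{\vv,i}\cap\Z^p}f(\kk).
\end{equation*}
Because $f$ is compactly supported, every sum here is finite, so all rearrangements are legitimate.

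Next I would perform the change of variables $\kk=\vv+A_{\vv,i}\ww$ in each inner sum. Since $A_{\vv,i}$ is unimodular over $\Z$, it induces a bijection of $\Z^p$ onto itself, and $C_{\vv,i}=\vv+A_{\vv,i}\R^+_{J_{\vv,i}}$; hence the lattice points of $C_{\vv,i}$ correspond exactly to $\ww\in\Z^p\cap\R^+_{J_{\vv,i}}$. Recalling the definition $\R^+_J=\prod_{j}\R^+_{1-\lb J\rb(j)}$, the coordinates of $\ww$ indexed by $[p]\setminus J_{\vv,i}$ are required to be strictly positive integers, i.e.\ $\ge1$, while the coordinates indexed by $J_{\vv,i}$ range over $\Z_+$; equivalently, writing $\ww=\kk'+\1_{J_{\vv,i}}^{c}$ with $\1_{J_{\vv,i}}^{c}:=\1-\1_{J_{\vv,i}}$ and $\kk'\ge\0$ — one checks the indexing in \eqref{eq:A_m(f,P)} and the remainder formula uses the shift by $\1_{J_{\vv,i}}$, so I would match conventions carefully here — the inner sum becomes $\sum_{\kk'\ge\0}g_{\vv,i}(\kk'+\1_{J_{\vv,i}})$ where $g_{\vv,i}(\yy)=f(\vv+A_{\vv,i}\yy)$ is again compactly supported and $2m$-times continuously differentiable.

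Now I would apply the Alt formula to each $\sum_{\kk'\ge\0}g_{\vv,i}(\kk'+\1_{J_{\vv,i}})$. This is where Theorem~\ref{prop:series} (or directly Theorem~\ref{th:} with $\wedge\nn$ beyond the support of $g_{\vv,i}$, so that $R_m$ in \eqref{eq:R_m} stabilizes and all the ``outer'' boundary integrals $A^J$ with $J\ne[p]$ vanish because $F$ vanishes at the relevant points) gives the main term as a linear combination $\sum_{\bbe}\tau_{m,\1+|\bbe|}\int g_{\vv,i}(\xx+\1_{J_{\vv,i}}-(\1+\bbe)/2)\dd\xx$ over a shifted copy of $\R^+_{J_{\vv,i}}$, and undoing the change of variables $\xx\mapsto\vv+A_{\vv,i}\xx$ turns each such integral into $\int_{C_{\vv,i}+A_{\vv,i}(\1_{J_{\vv,i}}-(\1+\bbe)/2)}f$, matching \eqref{eq:A_m(f,P)}; likewise the remainder terms reassemble into $R_m(f,P)$ after carrying the Jacobian (which is $\pm1$, hence harmless up to the determinant factor one tracks) through the integral in $\vv$ and $s$. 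Finally, the bound on $|R_m(f,P)|$ follows verbatim from \eqref{eq:R<}–\eqref{eq:R<<} applied to the aggregated quantity $\sum_{\vv}\sum_i(-1)^{t_\vv}\sum_{\kk\ge\0}g_{\vv,i}^{(\aal)}(\kk+\uu)$, whose supremum over the stated range of $\uu$ is exactly the hypothesized $M_{2m}$. The main obstacle is purely organizational: keeping the three shifts — the orthant offset $\1_{J_{\vv,i}}$, the Alt offset $(\1+\bbe)/2$, and the linear map $A_{\vv,i}$ — consistent across the main term, the remainder integral, and the derivative bound, so that the composite change of variables lands precisely on the formulas as displayed; there is no analytic difficulty beyond what Theorem~\ref{th:} already supplies.
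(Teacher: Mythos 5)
Your proposal takes essentially the same route as the paper: decompose $\lb P\rb$ via Corollary~\ref{cor:decomp} into signed indicators of unimodular cones $C_{\vv,i}$, change variables with the unimodular matrices $A_{\vv,i}$ (which preserve $\Z^p$) to reduce each cone sum to a shifted orthant sum $\sum_{\elll\ge\0}g_{\vv,i}(\elll+\1_{J_{\vv,i}})$, apply Theorem~\ref{th:} to each, and transport the integrals back via the unimodular change of variables. Since $f$ is compactly supported, Theorem~\ref{th:} applies directly once $\wedge\nn$ is large enough and everything stabilizes; the detour through Theorem~\ref{prop:series} is unnecessary here, and in fact the ``vanishing $A^J$'' objects you mention belong to Theorem~\ref{prop:series}, not Theorem~\ref{th:}.

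One slip to flag: you reversed the orthant convention. Since $\R^+_\vp=(0,\infty)$ when $\vp=0$, $\R^+_\vp=[0,\infty)$ when $\vp=1$, and $\R^+_J=\prod_j\R^+_{1-\lb J\rb(j)}$, it is the coordinates indexed by $J$ (not $[p]\setminus J$) that are forced to be strictly positive; hence $\Z^p\cap\R^+_J=\Z_+^p+\1_J$, as the paper records, and the correct substitution is $\ww=\elll+\1_{J_{\vv,i}}$, not $\ww=\elll+(\1-\1_{J_{\vv,i}})$. Carried through, your reading would place the integrals in \eqref{eq:A_m(f,P)} over the wrong translates of the cones. You did sense the tension and defer to the displayed formula, which rescues the argument; the point is simply that the discrepancy comes from this parity in the definition of $\R^+_J$, not from anything deeper in the proof.
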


Indeed, for $J\subseteq[p]$, let 
\begin{equation*}
	\Z^+_J:=\Z^p\cap\R^+_J=\Z_+^p+\1_J,  
\end{equation*}
where $\Z_+:=\Z\cap[0,\infty)$. 
Note that $A\Z^p=\Z^p$ for any unimodular matrix $A$ over $\Z$. 
Now Corollary~\ref{cor:poly} follows by 
Corollary~\ref{cor:decomp} and Theorem~\ref{th:} because 
\begin{equation*}
		\sum_{\kk\in C_{\vv,i}\cap\Z^p}f(\kk)
		=\sum_{\elll\in\Z^+_{J_{\vv,i}}}f(\vv+A_{\vv,i}\elll)
		=\sum_{\elll\ge\0}f(\vv+A_{\vv,i}(\elll+\1_{J_{\vv,i}}))
		=\sum_{\elll\ge\0}g_{\vv,i}(\elll+\1_{J_{\vv,i}})
\end{equation*}
and 
\begin{equation*}
	\int_{[-\1/2-\bbe/2,\,\infty\1)}\dd\yy\,g_{\vv,i}(\yy+\1_{J_{\vv,i}})
	=\int_{C_{\vv,i}+A_{\vv,i}(\1_{J_{\vv,i}}-(\1+\bbe)/2)}\dd\xx f(\xx).  
\end{equation*}

%
The expression for $A_m(f,P)$ in \eqref{eq:A_m(f,P)} is based on the second expression for $A_m$ in \eqref{eq:A_m,alt1}; of course, one can quite similarly use any one of the other 5 expressions in \eqref{eq:A_m}--\eqref{eq:A_m,alt2}. 


Notable differences between Corollary~\ref{cor:poly} and the main result of \cite{KSW_duke07} (Theorem~2 therein) include the following: (i) in \cite[Theorem~2]{KSW_duke07}, the summation is over all faces of the polytope $P$, whereas in \eqref{eq:A_m(f,P)} the corresponding summation is only over the vertices of $P$ and (ii) instead of the plain summation $\sum_{\kk\in P\cap\Z^p}f(\kk)$ in \eqref{eq:poly}, in the corresponding sum in \cite{KSW_duke07} the summands $f(\kk)$ are weighted (in accordance with the dimension of the relative interior of the face given that $\kk$ belongs to 
that relative interior). 

Note also that \cite[Theorem~2]{KSW_duke07} is obtained for simple polytopes. In \cite{agap-godinho07}, this result was extended to allow more general weights, and then further generalized to non-simple polytopes in \cite{agap-godin16}. 

It should be possible to extend Corollary~\ref{cor:poly} to the case when the function $f$ is a so-called symbol in the sense of H\"ormander \cite{hoermander} 
%
-- cf.\ \cite[Theorem~3]{KSW_duke07}, as well as conditions \eqref{eq:f^ to0} and \eqref{eq:R unif}. 
\big(Recall that a function $f\in C^\infty(\R^p)$ is called a symbol of
order $N$ if for every  $\aal\in\Z_+^p$ there is a real constant $C_\aal$ such that $|f^{(\aal)}(\xx)|\le C_\aal(1+\|\xx\|)^{N-\|\aal\|}$ for all $\xx\in\R^p$; here, as before, $\|\cdot\|:=\|\cdot\|_1$.\big) 
One way to attack this goal could be to show that, for any $\aal\in\Z_+^p$ such that $\aal\le(m-1)\1$, the essential support (except possibly for a set of Lebesgue measure $0$) of the function 
\begin{equation}
	\sum_{\bbe\colon|\bbe|=\aal}\;\sum_{\vv\in V}\sum_{i\in I_\vv}(-1)^{t_{\vv,i}}
\	\lb C_{\vv,i}+A_{\vv,i}(\1_{J_{\vv,i}}-(\1+\bbe)/2)\rb 
\end{equation}
is bounded, presumably being just a perturbed version of the indicator of the polytope $P$; cf.\ \eqref{eq:A_m(f,P)} and the equality in \cite[formula~(89)]{KSW_duke07}. 


Moreover, in view of the results of Section~\ref{series}, 
it appears not unlikely that Corollary~\ref{cor:poly} could be extended to general polyhedral sets. 


\section{Proofs
}\label{proof}
\begin{normalsize}
\begin{proof}[Proof of Theorem~\ref{th:}]
Take any $\kk$ (in $\Z_+^p$) such that $\kk\le\nn-\1$ 
and consider the Taylor expansion 
\begin{equation}\label{eq:taylor}
	f(\xx)=\sum_{\|\aal\|\le2m-1}\frac{f^{(\aal)}(\kk)}{\aal!}\,\uu^\aal
	+\sum_{\|\aal\|=2m}\frac{2m}{\aal!}\,\uu^\aal\,\int_0^1\dd s\,(1-s)^{2m-1}f^{(\aal)}(\kk+s\uu)
\end{equation}
for all $\xx\in(\kk-m\1/2,\,\kk+m\1/2]$, where $\uu:=\xx-\kk$. 
Integrating both sides of this identity in \break 
$\xx\in(\kk-\jj/2,\,\kk+\jj/2]$ (or, equivalently, in $\uu\in(-\jj/2,\,\jj/2]$) for each $\jj$ (in $\Z_+^p$) such that $\jj\le m\1$, then multiplying by $\ga_{m,\jj}$, and then summing in $\jj$, one has 
\begin{equation}\label{eq:A=S+R}
	A_{m,\kk}=S_{m,\kk}+R_{m,\kk},
\end{equation}
where 
\begin{align}
	A_{m,\kk}&:=\sum_{\jj=\1}^{m\1}\ga_{m,\jj}\int_{\kk-\jj/2}^{\kk+\jj/2}\dd \xx\,f(\xx), \label{eq:A_mk} \\ 
	S_{m,\kk}&:=\sum_{\|\aal\|\le m-1}\frac{f^{(2\al)}(\kk)}{(2\aal+1)!\,2^{2\|\aal\|}}
	\,\sum_{\jj=\1}^{m\1}\ga_{m,\jj}\, \jj^{2\aal+\1}, 
	\label{eq:S_mk}\\ 
	R_{m,\kk}&:=\sum_{\|\aal\|=2m}\frac{2m}{\aal!}\,
	\int_0^1\dd s\,(1-s)^{2m-1} \sum_{\jj=\1}^{m\1}\ga_{m,\jj} 
	\int_{-\jj/2}^{\jj/2}\dd \uu\;\uu^\aal\, f^{(\aal)}(\kk+s\uu) \notag \\ 
	&=\sum_{\|\aal\|=2m}\frac{2m}{\aal!}\,
	\int_0^1\dd s\,(1-s)^{2m-1} \sum_{\jj=\1}^{m\1}\ga_{m,\jj}\, 
	(\jj/2)^{\aal+\1}
	\int_{-\1}^{\1}\dd \vv\;\vv^\aal\, f^{(\aal)}(\kk+s\jj\vv/2);  \label{eq:R_mk}
\end{align}
the latter equality is obtained by the change of variables $\uu=\jj\vv$.


As noted before, in the special case $p=1$ Theorem~\ref{th:} turns into Theorem~3.1 of \cite{euler-maclaurin-alt}. 
So, without loss of generality (w.l.o.g.) $p\ge2$. 
Write 
\begin{equation}\label{eq:write} 
\sum_{\kk=\0}^{\nn-\1}\int_{\kk-\jj/2}^{\kk+\jj/2}\dd\xx\, f(\xx)
=\sum_{k_1=0}^{n_1-1}\cdots\sum_{k_p=0}^{n_p-1}
\int_{k_p-j_p/2}^{k_p+j_p/2}\dd x_p\cdots \int_{k_1-j_1/2}^{k_1+j_1/2}\dd x_1\,f(\xx). 
\end{equation}
In view of the multi-line display next after formula (9.7) in \cite{euler-maclaurin-alt} (note, in particular, the penultimate expression there), the right-hand side of \eqref{eq:write} can be rewritten as 
\begin{align*}
	&\sum_{k_1=0}^{n_1-1}\,\cdots\,\sum_{k_{p-1}=0}^{n_{p-1}-1}
	\,\sum_{i_p=0}^{j_p-1} \,\int_{i_p-j_p/2}^{n_p-1+j_p/2-i_p}\dd x_p
\int_{k_{p-1}-j_{p-1}/2}^{k_{p-1}+j_{p-1}/2}\dd x_{p-1}\cdots \int_{k_1-j_1/2}^{k_1+j_1/2}\dd x_1\,f(\xx) \\
=&\sum_{i_p=0}^{j_p-1} \int_{i_p-j_p/2}^{n_p-1+j_p/2-i_p}\dd x_p \,
\sum_{k_1=0}^{n_1-1}\cdots\sum_{k_{p-1}=0}^{n_{p-1}-1}
\, \int_{k_{p-1}-j_{p-1}/2}^{k_{p-1}+j_{p-1}/2}\dd x_{p-1}\cdots \int_{k_1-j_1/2}^{k_1+j_1/2}\dd x_1\,f(\xx) \\
&\vdots \\
=&\sum_{i_p=0}^{j_p-1} \int_{i_p-j_p/2}^{n_p-1+j_p/2-i_p}\dd x_p \cdots
\sum_{i_1=0}^{j_1-1} \int_{i_1-j_1/2}^{n_1-1+j_1/2-i_1}\dd x_1 \,f(\xx). 
\end{align*}
So, 
\begin{equation}
	\sum_{\kk=\0}^{\nn-\1}\int_{\kk-\jj/2}^{\kk+\jj/2}\dd\xx\, f(\xx)
	=\sum_{\iii=\0}^{\jj-\1} \int_{\iii-\jj/2}^{\nn-\1+\jj/2-\iii}\dd\xx \,f(\xx)  
\end{equation}
and hence, by \eqref{eq:A_mk},  
%
%
%
\begin{equation}\label{eq:sum A_mk=}
\begin{aligned}
	\sum_{\kk=\0}^{\nn-\1}A_{m,\kk}
	=\sum_{\jj=\1}^{m\1}\ga_{m,\jj}\sum_{\kk=\0}^{\nn-\1}\int_{\kk-\jj/2}^{\kk+\jj/2}\dd \xx\,f(\xx)
=\sum_{\jj=\1}^{m\1}\ga_{m,\jj} \sum_{\iii=\0}^{\jj-1} 
		\int_{\iii-\jj/2}^{\nn-\1+\jj/2-\iii}\dd \xx\,f(\xx)=A_m.
\end{aligned}		 
\end{equation}
Similarly, but using 
the last expression in 
the mentioned multi-line display next after formula (9.7) in \cite{euler-maclaurin-alt} rather than the penultimate expression there, 
we have 
\begin{equation}\label{eq:sum A_mk==}
	\sum_{\kk=\0}^{\nn-\1}A_{m,\kk}=\sum_{\jj=\1}^{m\1}\ga_{m,\jj} \sum_{\iii=\0}^{\jj-1} 
		\int_{-\1+\jj/2-\iii}^{\nn-\1+\jj/2-\iii}\dd \xx\,f(\xx).  
\end{equation}
In particular, it follows that the two double sums in \eqref{eq:A_m} are the same.  

Suppose now that some $\iii$ and $\jj$ in $\Z_+^p$ and some $\bbe\in\Z^p$ are related by the condition $\bbe=2\iii-\jj+\1$. Then the condition $\1\le\jj\le m\1\ \&\ \0\le\iii\le\jj-\1$ is equivalent to the condition 
$$(1-m)\1\le\bbe\le(m-1)\1\ \&\ \1+|\bbe|\le\jj\le m\1\ \&\ (\jj-\1-|\bbe|)/2\in\Z_+^p.$$ 
So, 
\begin{equation}\label{eq:ttau1}
	\sum_{\jj=\1}^{m\1}\ga_{m,\jj}\sum_{\iii=\0}^{\jj-\1}\int_{\iii-\jj/2}^{\nn-\1+\jj/2-\iii}
	=\sum_{\bbe=(1-m)\1}^{(m-1)\1}\tilde\tau_{m,\1+|\bbe|}\,\int_{\bbe/2-\1/2}^{\nn-\1/2-\bbe/2}  
\end{equation}
and 
\begin{equation}\label{eq:ttau2}
	\sum_{\jj=\1}^{m\1}\ga_{m,\jj}\sum_{\iii=\0}^{\jj-\1}\int_{-\1+\jj/2-\iii}^{\nn-\1+\jj/2-\iii}
	=\sum_{\bbe=(1-m)\1}^{(m-1)\1}\tilde\tau_{m,\1+|\bbe|}\,\int_{-\1/2-\bbe/2}^{\nn-\1/2-\bbe/2},   
\end{equation}
where 
\begin{align*}
	\tilde\tau_{m,\1+|\bbe|}&:=
	\sum_{\jj=\1+|\bbe|}^{m\1}\ga_{m,\jj}\,\ii\big\{(\jj-\1-|\bbe|)/2\in\Z_+^p\big\} 
	\\
	&=\sum_{j_1=1+|\be_1|}^m\dots\sum_{j_p=1+|\be_p|}^m
	\,\prod_{r=1}^p\big(\ga_{m,j_r}\,\ii\big\{(j_r-1-|\be_r|)/2\in\Z_+\big\}\big) \\ 
	&=\prod_{r=1}^p\,\sum_{j_r=1+|\be_r|}^m
	\big(\ga_{m,j_r}\,\ii\big\{(j_r-1-|\be_r|)/2\in\Z_+\big\}\big) \\ 
	&=\prod_{r=1}^p\tau_{m,1+|\be_r|}=\tau_{m,\1+|\bbe|},  
\end{align*}
in view of \eqref{eq:ga_j} and \eqref{eq:tau_j}. 
Thus, by \eqref{eq:ttau1} and \eqref{eq:ttau2}, the first double sum in \eqref{eq:A_m} equals the first sum in \eqref{eq:A_m,alt1}, and the second double sum in \eqref{eq:A_m} equals the second sum in \eqref{eq:A_m,alt1}. 

Also, it is obvious that the first sum in \eqref{eq:A_m,alt2} equals the first sum in \eqref{eq:A_m,alt1}, and the second sum in \eqref{eq:A_m,alt2} equals the second sum in \eqref{eq:A_m,alt1}. 
%


Next, for any $\aal$ (in $\Z_+^p$) with $\|\aal\|\le m-1$, 
\begin{equation}\label{eq:sum ga jj}
	\sum_{\jj=\1}^{m\1}\ga_{m,\jj}\, \jj^{2\aal+\1}
	=\sum_{j_1=1}^m\dots\sum_{j_p=1}^m\,\prod_{r=1}^p\big(\ga_{m,j_r}j_r^{2\al_r+1}\big)
	=\prod_{r=1}^p\sum_{j=1}^m \ga_{m,j}j^{2\al_r+1}=\ii\{\aal=\0\} 
\end{equation}
by formula (9.6) in \cite{euler-maclaurin-alt}. So, by \eqref{eq:S_mk}, 
\begin{equation}\label{eq:S=f}
	S_{m,\kk}=f(\kk).  
\end{equation}

Also, the case $\aal=\0$ in \eqref{eq:sum ga jj} shows that 
the first two sums in 
\eqref{eq:sum ga_j}, involving the $\ga_{m,\jj}$'s, are equal to $1$.  
The second equality 
in \eqref{eq:sum ga_j} follows from 
the equality of the first sums in \eqref{eq:A_m} and \eqref{eq:A_m,alt1} to each other
by taking there $\nn=m\1$ and 
$f(\xx)\equiv\ii\{(m/2-1)\1\le\xx\le m\1/2\}$; then each of the integrals in \eqref{eq:A_m}--\eqref{eq:A_m,alt2} equals $1$. 

By \eqref{eq:R_mk} and \eqref{eq:R_m},  
\begin{equation*}
	\sum_{\kk=\0}^{\nn-\1}R_{m,\kk}=R_m. 
\end{equation*}
So, \eqref{eq:} follows immediately from \eqref{eq:A=S+R}, \eqref{eq:sum A_mk=}, and \eqref{eq:S=f}. 

In view of \eqref{eq:R_m} and \eqref{eq:<M}, 
\begin{equation*}
	|R_m|\le \tilde R_m:=
	M_{2m}\,\frac{m}{2^{2m+p-1}}\,\sum_{\|\aal\|=2m}\frac1{\aal!}
\,\int_0^1\dd s\,(1-s)^{2m-1}\int_{-\1}^{\1}\dd \vv\,|\vv|^\aal
	\sum_{\jj=\1}^{m\1}|\ga_{m,\jj}|\,\jj^{\aal+\1}.
\end{equation*}
Computing the integrals here, it is easy to check that $\tilde R_m$ equals the upper bound in \eqref{eq:R<}. 
On the other hand, using the multinomial formula, the definition of $\ga_{m,\jj}$ in \eqref{eq:ga_j}, and the H\"older inequality $\Big(\sum_{r=1}^p |v_r j_r|\Big)^{2m}\le p^{2m-1}\sum_{r=1}^p |v_r j_r|^{2m}$, we see that 
\begin{equation}\label{eq:tR}
\begin{aligned}
	\tilde R_m=&\frac{M_{2m}}{2^{2m}(2m)!}\,\sum_{\jj=\1}^{m\1}|\ga_{m,\jj}|\,\jj^\1
	\int_{\0}^{\1}\dd \vv\,\sum_{\|\aal\|=2m}\frac{(2m)!}{\aal!}\,(\vv\jj)^\aal \\ 
=& 	\frac{M_{2m}}{2^{2m}(2m)!}\,\sum_{\jj=\1}^{m\1}|\ga_{m,\jj}|\,\jj^\1
	\int_{\0}^{\1}\dd \vv\,\Big(\sum_{r=1}^p v_r j_r\Big)^{2m} \\ 
\le& \frac{M_{2m}p^{2m-1}}{2^{2m}(2m)!}\,
\Big(\sum_{j_1=1}^m\dots\sum_{j_p=1}^m\,|\ga_{m,j_1}|j_1\dots|\ga_{m,j_p}|j_p\Big) \sum_{r=1}^p j_r^{2m}
	\int_{\0}^{\1}v_r^{2m}\dd \vv\ \\ 	
=& \frac{M_{2m}p^{2m}}{2^{2m}(2m+1)!}\,
\sum_{j=1}^m|\ga_{m,j}|j^{2m+1}\, \Big(\sum_{j=1}^m|\ga_{m,j}|j\Big)^{p-1}. 	
\end{aligned}
\end{equation}
By Proposition 4.4 in \cite{euler-maclaurin-alt}, 
\begin{equation}\label{eq:sum gaj}
	\sum_{j=1}^m|\ga_{m,j}|j^{2m+1}\le1.0331\pi\La_*^m m^{2m+1},  
\end{equation}
and for $m\ge2$ the factor $1.0331$ can be replaced by $1.001$. 

It follows from \cite{watson} that $\Ga(x+1)/\Ga(x+1/2)>\sqrt{x+1/\pi}$ for real $x>0$. For $x=m\in\N$, this inequality can be rewritten as $2^{2m}\Big/\binom{2m}{m}<\sqrt{\pi m+1}$. 
So, in view of \eqref{eq:ga_j}, 
\begin{equation}\label{eq:<sqrt}
	\sum_{j=1}^m|\ga_{m,j}|j=2^{2m}\Big/\binom{2m}{m}-1<\sqrt{\pi m}.  
\end{equation}
Collecting \eqref{eq:tR}, \eqref{eq:sum gaj}, \eqref{eq:<sqrt}, and \eqref{eq:ka}, we obtain \eqref{eq:R<<}. 

Theorem~\ref{th:} is now completely proved. 
\end{proof}


To prove Theorem~\ref{prop:series}, we shall need the following multidimensional generalization of the fundamental theorem of calculus (FTC). It is easy to prove and probably well known; however, I have not been able to find an appropriate reference. 

\begin{lemma}\label{lem:FTC}
\emph{(Multidimensional FTC)}\quad
Let $F$ be any antiderivative of $f$. Take any $\uu$ and $\vv$ in $\R^p$. 
Then 
\begin{equation}\label{eq:FTC}
	\int_\uu^\vv\dd\xx f(\xx)=\sum_{J\subseteq[p]}(-1)^{p-|J|}F(\vv_J), 
\end{equation}
where $\vv_J:=\uu\1_{[p]\setminus J}+\vv\1_J=\uu+(\vv-\uu)\1_J$. 
\end{lemma}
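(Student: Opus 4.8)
\textbf{Proof plan for Lemma~\ref{lem:FTC} (Multidimensional FTC).}
The plan is to reduce the multidimensional statement to the one-dimensional fundamental theorem of calculus applied iteratively, one coordinate at a time, and to organize the bookkeeping of the endpoints via the subset sums already present in the statement. First I would recall the definition of an antiderivative used in the paper, namely $F^{(\1)}=f$, which in coordinates reads $\partial_1\cdots\partial_p F=f$; equivalently, for each fixed $r$, the function $\partial_{r+1}\cdots\partial_p F$ is an antiderivative (in the single variable $x_r$) of $\partial_{r+1}\cdots\partial_p f$ after integrating in $x_1,\dots,x_r$. I would also note at the outset that it suffices to treat the case $\uu\le\vv$ (all coordinates), since the general case follows from the sign convention $\int_\uu^\vv\dd\xx\,h(\xx):=(-1)^{\sum_r\ii\{u_r>v_r\}}\int_{[\uu\wedge\vv,\uu\vee\vv]}\dd\xx\,h(\xx)$ together with the fact that flipping the orientation in coordinate $r$ simultaneously flips the sign on the left and swaps the roles of $u_r$ and $v_r$ in the right-hand sum (so every $\vv_J$ either keeps or trades its $r$th entry, and the signs match up); I would state this reduction but not belabor it.

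For the main case $\uu\le\vv$, the key step is induction on $p$ (or, equivalently, peeling off coordinates via Fubini). Write $\int_{[\uu,\vv]}\dd\xx\,f(\xx)=\int_{u_p}^{v_p}\dd x_p\,\big(\int_{[\uu',\vv']}\dd\xx'\,f(\xx',x_p)\big)$, where primes denote the first $p-1$ coordinates. I would define $G(\xx',x_p):=\int_{[\uu',\vv']}\dd\xx'\text{-antiderivative}$; more precisely, apply the $(p-1)$-dimensional inductive hypothesis to the function $x_p\mapsto f(\cdot,x_p)$ with antiderivative $x_p\mapsto \partial_p F(\cdot,x_p)$ — which is legitimate since $\partial_p F$ is an antiderivative, in the $(p-1)$-variable sense, of $\partial_p f$ — to get that the inner integral equals $\sum_{J'\subseteq[p-1]}(-1)^{(p-1)-|J'|}\,\partial_p F(\vv'_{J'},x_p)$. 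Then integrate in $x_p$ from $u_p$ to $v_p$ using the ordinary one-variable FTC for $\partial_p F(\vv'_{J'},\cdot)$, producing $F(\vv'_{J'},v_p)-F(\vv'_{J'},u_p)$. Reindexing the resulting double sum over subsets $J'\subseteq[p-1]$ and the binary choice $p\in J$ or $p\notin J$ recovers exactly $\sum_{J\subseteq[p]}(-1)^{p-|J|}F(\vv_J)$, with the sign $-1$ attached to the $u_p$ term accounting for the extra factor $(-1)^{p-|J|}$ versus $(-1)^{(p-1)-|J'|}$ when $p\notin J$. The base case $p=1$ is just the classical FTC, $\int_{u_1}^{v_1}f=F(v_1)-F(u_1)=\sum_{J\subseteq\{1\}}(-1)^{1-|J|}F(\vv_J)$.

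The only genuine technical point — and the step I would flag as requiring a line of justification rather than being fully routine — is the interchange of the order of integration (Fubini) and, in the same breath, the fact that the partially-integrated function is $C^1$ in the remaining variable so that the one-variable FTC applies; both are immediate here because $f$ is assumed $2m$-times continuously differentiable (in particular continuous) and we may restrict attention to the compact box $[\uu\wedge\vv,\uu\vee\vv]$, on which everything is bounded and continuous, so differentiation under the integral sign in the inductive step is valid. I would therefore present the proof as a short induction with these two remarks inserted, and conclude by noting that the identity extends to $f$ with values in an arbitrary normed space (as in Remark~\ref{rem:vect}) since both sides are computed coordinatewise, or directly because the one-variable FTC holds for Banach-space-valued $C^1$ functions. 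This completes the proof.
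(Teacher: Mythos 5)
Your proof is correct and takes essentially the same inductive route as the paper: peel off the last coordinate by Fubini, apply the inductive hypothesis in the first $p-1$ variables, and use the one-variable FTC in $x_p$; the paper merely packages the subset bookkeeping as integration against the signed Dirac product measure $\De_{\uu,\vv}=\De_{u_1,v_1}\otimes\cdots\otimes\De_{u_p,v_p}$ and runs the chain from the sum side to the integral side rather than the reverse. One small slip in your justification: for fixed $x_p$, the function $\partial_p F(\cdot,x_p)$ is, in the $(p-1)$-variable sense, an antiderivative of $f(\cdot,x_p)$ (not of $\partial_p f$), which is of course what your subsequent line actually uses.
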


\begin{proof}
This will be done by induction in $p$. For $p=1$, \eqref{eq:FTC} is the usual, one-dimensional FTC. Suppose that $p\ge2$ and that \eqref{eq:FTC} holds with 
$p-1$ in place of $p$. 

Introduce some notation, as follows. For $\xx=(x_1,\dots,x_{p-1},x_p)\in\R^p$, let $\tilde\xx:=(x_1,\dots,x_{p-1})$, and similarly define $\tilde\uu$ and $\tilde\vv$. Also, for any $J\subseteq[p-1]$, define $\tilde\vv_J$ similarly to $\vv_J$, but based on $\tilde\uu$ and $\tilde\vv$ rather than on $\uu$ and $\vv$. For any function $h\colon\R^p\to\R$ and any real $x_p$, let $h_{x_p}$ denote the ``cross-section'' function from $\R^{p-1}$ to $\R$ defined by the formula $h_{x_p}(\tilde\xx):=h(\xx)$, again for $\xx=(x_1,\dots,x_{p-1},x_p)\in\R^p$. 
Note that, for each real $x_p$, the function $\big(F^{(\1_{\{p\}})}\big)_{x_p}$ is an antiderivative of the function $f_{x_p}$. 

For real $u$ and $v$, let $\De_{u,v}:=\de_v-\de_u$, where $\de_x$ is the Dirac measure at $x$. 
Consider the signed product measures 
\begin{equation*}
\De_{\uu,\vv}:=\De_{u_1,v_1}\otimes\dots\otimes\De_{u_p,v_p}=\sum_{J\subseteq[p]}(-1)^{p-|J|}\de_{\vv_J} 	
\end{equation*}
and $\tilde\De_{\uu,\vv}:=\De_{u_1,v_1}\otimes\dots\otimes\De_{u_{p-1},v_{p-1}}$, so that $\De_{\uu,\vv}=\tilde\De_{\uu,\vv}\otimes\De_{u_p,v_p}$. 

Now, appropriately rewriting the right-hand side of \eqref{eq:FTC} and then using the Fubini theorem and the induction hypothesis, we have 
\begin{alignat*}{2}
\sum_{J\subseteq[p]}(-1)^{p-|J|}F(\vv_J)&=\int_{\R^p}\dd\De_{\uu,\vv}\;F 
&&\text{\quad(rewriting)}\\ 
&=\int_\R\De_{u_p,v_p}(\dd x_p)\,\int_{\R^{p-1}}\dd\tilde\De_{\uu,\vv}\;F_{x_p} &&\text{\quad(Fubini)}\\ 
&=\int_\R\De_{u_p,v_p}(\dd x_p)\,\sum_{J\subseteq[p-1]}(-1)^{p-1-|J|}F_{x_p}(\tilde\vv_J) &&\text{\quad(similar rewriting)}\\ 
&=\int_{u_p}^{v_p}\dd x_p\;\sum_{J\subseteq[p-1]}(-1)^{p-1-|J|}
\frac{\partial}{\partial x_p} F_{x_p}(\tilde\vv_J) &&\text{\quad(one-dimensional FTC)}\\ 
&=\int_{u_p}^{v_p}\dd x_p\;\sum_{J\subseteq[p-1]}(-1)^{p-1-|J|}\big(F^{(\1_{\{p\}})}\big)_{x_p}(\tilde\vv_J) && 
\\  
&=\int_{u_p}^{v_p}\dd x_p\;\int_{\tilde\uu}^{\tilde\vv}\dd\tilde\xx\;f_{x_p}(\tilde\xx) 
&& \text{\quad(induction)} \\ 
&=	\int_\uu^\vv\dd\xx f(\xx). &&
\end{alignat*}
\end{proof}

\begin{proof}[Proof of Theorem~\ref{prop:series}]
Let 
\begin{equation}\label{eq:R_{m,f}}
	R_{m,f}(\nn):=R_m, 
\end{equation}
with $R_m$ as defined in \eqref{eq:R_m}.
Then, by \eqref{eq:R unif}, 
\begin{equation}\label{eq:R to R}
	R_{m,f}(\nn)\underset{\wedge\nn\to\infty}\longrightarrow R_{m,f}(\infty).  
\end{equation}
Let 
\begin{equation}\label{eq:A_m,F(n)}
	A_{m,F}(\nn):=\sum_{J\subseteq[p]}(-1)^{p-|J|}A^J_{m,F}(\nn)
	=\tA_{m,F}(\nn)+(-1)^p A^\emptyset_{m,F}(\nn), 
\end{equation}
in view of \eqref{eq:tA_m,F(n)}. 
By \eqref{eq:}--\eqref{eq:A_m}, Lemma~\ref{lem:FTC}, \eqref{eq:A_m,F(n)}, and \eqref{eq:G}, 
\begin{equation}\label{eq:..}
\begin{aligned}
	\sum_{\kk=\0}^{\nn-\1}f(\kk)-A_{m_0,F}(\nn)+R_{m,f}(\nn)=&A_{m,F}(\nn)-A_{m_0,F}(\nn) \\ 
=& \sum_{J\subseteq[p]}(-1)^{p-|J|}\big(A^J_{m,F}(\nn)-A^J_{m_0,F}(\nn)\big) \\ 	
=& \sum_{J\subseteq[p]}(-1)^{p-|J|}
\big(A^J_{m,T_J}(\nn)-A^J_{m_0,T_J}(\nn)+A^J_{m,F-T_J}(\nn)-A^J_{m_0,F-T_J}(\nn)\big), 
\end{aligned}	
\end{equation}
where 
$T_J=T_{J,\nn,m_0,F}$ is the Taylor polynomial of order $2m_0-1$ for the function $F$ at the point $\nn\1_J-\1$, so that 
\begin{equation*}
	T_J(\xx)=\sum_{\|\aal\|\le2m_0-1}\frac{F^{(\aal)}(\nn\1_J-\1)}{\aal!}\,(\xx-\nn\1_J+\1)^\aal
\end{equation*}
for $\xx\in\R^p$. 

Consider the mononomial $P(\xx)=\xx^\aal$ of degree $\|\aal\|\le2m_0-1$, so that $P(\xx)=\prod_{r=1}^pP_r(x)$, where $P_r(x):=x^{\al_r}$. 

Take any $r=1,\dots,p$ and any $J\subseteq[p]$, and let $n_{r,J}:=n_r\ii\{r\in J\}$. Following the lines of the proof of Proposition 5.1 in \cite{euler-maclaurin-alt} for the case when $f=P_r'$ and $F=P_r$, so that the polynomial $T$ therein coincides with $F=P_r$, we see from \cite[(5.5) and (9.19)]{euler-maclaurin-alt} that 
\begin{equation*}
	\sum_{\be=1-m}^{m-1}\tau_{m,1+|\be|}\,P_r(n-1/2-\be/2)=G_{m,P_r}(n)=G_{m_0,P_r}(n)
	=\sum_{\be=1-m_0}^{m_0-1}\tau_{m_0,1+|\be|}\,P_r(n-1/2-\be/2)
\end{equation*}
for any $n\in\Z_+$. 
So, by \eqref{eq:G_m,alt1} and \eqref{eq:tau_j}, 
\begin{align*}
A^J_{m,P}(\nn)=&\sum_{\bbe=(1-m)\1}^{(m-1)\1}\tau_{m,\1+|\bbe|}\,P(\nn\1_J-\1/2-\bbe/2) \\ 
=&\sum_{\bbe=(1-m)\1}^{(m-1)\1}\prod_{r=1}^p\big(\tau_{m,\1+|\be_r|}\,P_r(n_{r,J}-1/2-\be_r/2)\big) \\ 
=&\prod_{r=1}^p\sum_{\be=1-m}^{m-1}\big(\tau_{m,\1+|\be|}\,P_r(n_{r,J}-1/2-\be/2)\big) \\ 
=&\prod_{r=1}^p\sum_{\be=1-m_0}^{m_0-1}\big(\tau_{m_0,\1+|\be|}\,P_r(n_{r,J}-1/2-\be/2)\big)
=A^J_{m_0,P}(\nn).  
\end{align*}
Since $T_J$ is a polynomial of degree $\le 2m_0-1$ and $A^J_{m,F}(\nn)$ is linear in $F$, we conclude that 
\begin{equation}\label{eq:A_T-A_T}
	A^J_{m,T_J}(\nn)-A^J_{m_0,T_J}(\nn)=0\quad\text{for all $J\subseteq[p]$}.
\end{equation}

Further, the remainder $(F-T_J)(\nn\1_J-\1+\uu)$ at point $\nn\1_J-\1+\uu$ of the Taylor approximation $T_J$ of $F$ at $\nn\1_J-\1$ equals (cf.\ \eqref{eq:taylor}) 
\begin{equation*}
\sum_{\|\aal\|=2m_0}\frac{2m_0}{\aal!}\,\uu^\aal\,\int_0^1\dd s\,(1-s)^{2m_0-1}F^{(\aal)}(\nn\1_J-\1+s\uu), 
\end{equation*}
which, by \eqref{eq:f^ to0}, goes to $0$ as $\wedge\nn\to\infty$ unless $J=\emptyset$. 
So, by \eqref{eq:G}, 
\begin{equation*}
	A^J_{m,F-T_J}(\nn)\underset{\wedge\nn\to\infty}\longrightarrow0\quad\text{and}\quad
	A^J_{m_0,F-T_J}(\nn)\underset{\wedge\nn\to\infty}\longrightarrow0\quad\text{unless $J=\emptyset$}. 
\end{equation*}
It follows now by 
\eqref{eq:A_T-A_T}, \eqref{eq:A^empty}, the linearity of $A^J_{m,F}$ in $F$, and (again) \eqref{eq:A_T-A_T} that the limit of the last expression in \eqref{eq:..} as $\wedge\nn\to\infty$ equals
\begin{align*}
	(-1)^p\big(A^\emptyset_{m,F-T_\emptyset}(\0)-A^\emptyset_{m_0,F-T_\emptyset}(\0)\big)
	&=(-1)^p\big(A^\emptyset_{m,F}(\0)-A^\emptyset_{m_0,F}(\0)\big)
	-(-1)^p\big(A^\emptyset_{m,T_\emptyset}(\0)-A^\emptyset_{m_0,T_\emptyset}(\0)\big) \\  
	&=(-1)^p\big(A^\emptyset_{m,F}(\0)-A^\emptyset_{m_0,F}(\0)\big). 
\end{align*}
Now \eqref{eq:series} follows, in view of \eqref{eq:R to R} and (the second equality in) \eqref{eq:A_m,F(n)}. 

Inequality \eqref{eq:R<<,infty} follows immediately from \eqref{eq:<M} and \eqref{eq:R<}--\eqref{eq:R<<}. 

Formula \eqref{eq:A^empty} follows immediately from \eqref{eq:G_m,alt2}. 
 
Theorem~\ref{prop:series} is completely proved. 
\end{proof}

\begin{proof}[Proof of Theorem~\ref{th:c}]
Note that 
\begin{equation}\label{eq:incl-excl}
\begin{aligned}
	\sum_{\kk=\0}^{\cc-\1}f(\kk)&=\sum_{\kk\ge\0}f(\kk)\ii\{\kk\le\cc-\1\} \\ 
	&=\sum_{\kk\ge\0}f(\kk)\prod_{r=1}^p\big(\ii\{k_r\le n_r+c_r-1\}-\ii\{c_r\le k_r\le n_r+c_r-1\}\big) \\  
&	=\sum_{\kk\ge\0}f(\kk)\sum_{J\subseteq[p]}(-1)^{|J|}
	\ii\{k_r\le n_r+c_r-1\ \forall r\in[p]\setminus J,\ c_r\le k_r\le n_r+c_r-1\ \forall r\in J\} \\ 
&	=\sum_{J\subseteq[p]}(-1)^{|J|}\sum_{\kk=\cc\1_J}^{\nn+\cc-\1}f(\kk)  
=\sum_{\kk=\0}^{\nn+\cc-\1}f(\kk)
+\sum_{\emptyset\ne J\subseteq[p]}(-1)^{|J|}\sum_{\kk=\cc\1_J}^{\nn+\cc-\1}f(\kk). 
\end{aligned}	
\end{equation}
Hence, 
\begin{equation}\label{eq:c}
\begin{aligned}
\sum_{\kk=\0}^{\nn+\cc-\1}f(\kk) - \tA_{m_0,F}(\nn+\cc)
&=\sum_{\kk=\0}^{\cc-\1}f(\kk)
-\sum_{\emptyset\ne J\subseteq[p]}(-1)^{|J|}\sum_{\kk=\cc\1_J}^{\nn+\cc-\1}f(\kk)-\tA_{m_0,F}(\nn+\cc) \\ 
&=\sum_{\kk=\0}^{\cc-\1}f(\kk)
-\sum_{\emptyset\ne J\subseteq[p]}(-1)^{|J|}
\Big(\sum_{\kk=\0}^{\nn+\cc-\cc\1_J-\1}f_{\cc\1_J}(\kk)-\tA_{m_0,F_{\cc\1_J}}(\nn+\cc-\cc\1_J) \Big)
+\RR,  
\end{aligned}	
\end{equation}
where 
\begin{equation*}
	\RR:=-\sum_{\emptyset\ne J\subseteq[p]}(-1)^{|J|}\tA_{m_0,F_{\cc\1_J}}(\nn+\cc-\cc\1_J)
	-\tA_{m_0,F}(\nn+\cc). 
\end{equation*}
By Lemma~\ref{lem:FTC} with $F=1$ (and $f=0$), 
\begin{equation}\label{eq:=0,-1}
	\sum_{J\subseteq[p]}(-1)^{|J|}=0\quad\text{and hence}
	\sum_{\emptyset\ne J\subseteq[p]}(-1)^{|J|}=-1. 
\end{equation}
Therefore and in view of \eqref{eq:tA_m,F(n)} and \eqref{eq:G_m,alt1},  
\begin{equation*}
	\RR=\sum_{\emptyset\ne J\subseteq[p]}(-1)^{|J|}\RR_J, 
\end{equation*}
where 
\begin{equation*}
	\RR_J:=\tA_{m_0,F}(\nn+\cc)-\tA_{m_0,F_{\cc\1_J}}(\nn+\cc-\cc\1_J)
	=\sum_{\bbe=(1-m)\1}^{(m-1)\1}\tau_{m,\1+|\bbe|}\,\RR_{J,\bbe}, 
\end{equation*}
\begin{equation*}
	\RR_{J,\bbe}:=\sum_{\emptyset\ne K\subseteq[p]}(-1)^{p-|K|}
	\big[H\big((\nn+\cc)\1_K\big)-H\big(\cc\1_J+(\nn+\cc-\cc\1_J)\1_K\big)\big], 
\end{equation*}
and $H(\xx):=F(\xx-\1/2-\bbe/2)$. 
Thus, 
\begin{equation}\label{eq:RR}
		\RR
	=\sum_{\bbe=(1-m)\1}^{(m-1)\1}\tau_{m,\1+|\bbe|}\,
	\sum_{\emptyset\ne K\subseteq[p]}(-1)^{p-|K|}\RR_{\bbe,K},
\end{equation}
where 
\begin{equation}\label{eq:R_be,K}
\begin{aligned}
	\RR_{\bbe,K}
	:=&\sum_{\emptyset\ne J\subseteq[p]}(-1)^{|J|}
	\big[H\big((\nn+\cc)\1_K\big)-H\big(\cc\1_J+(\nn+\cc-\cc\1_J)\1_K\big)\big] \\ 
	=&\sum_{\emptyset\ne J\subseteq[p]}(-1)^{|J|}
	\big[H\big((\nn+\cc)\1_K\big)-H\big((\nn+\cc)\1_K+\cc\1_{J\setminus K}\big)\big] \\  
	=&\sum_{L\in\LL_K}
	\big[H\big((\nn+\cc)\1_K\big)-H\big((\nn+\cc)\1_K+\cc\1_L\big)\big]
	\sum_{J\in\JJ_{K,L}}(-1)^{|J|},   
\end{aligned}	
\end{equation}
\begin{equation*}
	\LL_K:=\{L\colon L\subseteq[p],\ L\ne\emptyset,\ L\cap K=\emptyset\},\quad
	\JJ_{K,L}:=\{J\colon \emptyset\ne J\subseteq[p],\ J\setminus K=L\}. 
\end{equation*}
For any $K\subseteq[p]$ and any $L\in\LL_K$, 
the map $J\mapsto I_J:=J\cap K$ is a bijection of the set $\JJ_{K,L}$ onto the set $\{I\colon I\subseteq K\}$, and for any $J\in\JJ_{K,L}$ the set $J$ is the disjoint union of the sets $I_J$ and $L$, so that $|J|=|I_J|+|L|$. 
It follows by \eqref{eq:=0,-1} that for any $K\subseteq[p]$ and any $L\in\LL_K$ one has 
$\sum_{J\in\JJ_{K,L}}(-1)^{|J|}=\sum_{I\subseteq K}(-1)^{|I|}(-1)^{|L|}=0$. 
Looking back at \eqref{eq:R_be,K} and \eqref{eq:RR}, we see that $\RR=0$. 
 
Letting now $\wedge\nn\to\infty$ and recalling \eqref{eq:c}, 
\eqref{eq:series}, the definition \eqref{eq:R_c} of $R_{m,f,\cc}(\infty)$, and formulas \eqref{eq:A^empty}, \eqref{eq:G_m,alt2}, and \eqref{eq:tA_m,F(n)},  
we have 
\begin{equation*}
\begin{aligned}
\sum_{\kk\ge\0}^\Alt f(\kk) - \sum_{\kk=\0}^{\cc-\1}f(\kk)
&=-\sum_{\emptyset\ne J\subseteq[p]}(-1)^{|J|}\sum_{\kk\ge\0}^\Alt f_{\cc\1_J}(\kk) \\ 
&=-\sum_{\emptyset\ne J\subseteq[p]}(-1)^{|J|}
\big[(-1)^pA^\emptyset_{m,F_{\cc\1_J}}(\0)-R_{m,f_{\cc\1_J}}(\infty)\big] \\   
&=-R_{m,f,\cc}(\infty)
-\sum_{\emptyset\ne J\subseteq[p]}(-1)^{p-|J|}
\sum_{\aal=\0}^{(m-1)\1}\tau_{m,\1+\aal}\,\sum_{\bbe\colon|\bbe|=\aal}\,
F(\cc\1_J+\bbe/2-\1/2) \\   
&=-R_{m,f,\cc}(\infty)
-\tA_{m,F}(\cc),    
\end{aligned}	
\end{equation*}
which completes the proof of Theorem~\ref{th:c}. 
\end{proof}

\begin{proof}[Proof of Proposition~\ref{prop:C-decomp}]
Let $\aa_1,\dots,\aa_p$ denote the columns of the matrix $A$, so that $\aa_i\in\Z^p$ for each $i\in[p]$ and 
\begin{equation*}
	C:=A\R^+_J=\sum_{i\in[p]}\R^+_{\vp_i}\aa_i,\quad\text{where}\quad
	\vp_i:=
	1-\lb J\rb(i). 
\end{equation*}

If the matrix $A$ is unimodular, there is nothing to prove. So, 
w.l.o.g., $|\det A|\ge2$. Then there is a 
vector $\ww\in\Z^p\setminus\{\0\}$ such that 
\begin{equation}\label{eq:w}
	\ww=w_1\aa_1+\dots+w_p\aa_p
\end{equation}
for some real numbers $w_1,\dots,w_p$ in the interval $[0,1)$ (in fact, there are exactly $|\det A|-1$ such vectors $\ww$). 
%
%
%
Thus, w.l.o.g.\ 
for some 
$k\in[p]$ one has 
\begin{equation}\label{eq:k}
	\text{$w_j>0$ for $j\in[k]$\quad and\quad $w_j=0$ for $j\in[p]\setminus[k]$. }
\end{equation}
For each $i\in[k]$, let $A_i$ be the (integral) matrix obtained from the matrix $A$ by replacing its $i$th column, $\aa_i$, by $\ww$; then $\det A_i=w_i\det A$ and hence 
\begin{equation}\label{eq:A_i}
	0<|\det A_i|<|\det A|. 
\end{equation}
We shall see that \eqref{eq:C-decomp} holds with $I=[k]$, the matrices $A_i$ just defined, and some subsets $J_1,\dots,J_k$ of the set $[p]$. 

Then, repeating 
the step described in the last paragraph -- for each of the matrices $A_1,\dots,A_k$ in place of $A$, in view of \eqref{eq:A_i} we shall eventually obtain \eqref{eq:C-decomp} with unimodular $p\times p$ matrices $A_i$ over $\Z$, as required. 
This step relies mainly on the following combinatorial lemma. 

\begin{lemma}\label{lem:C-decomp}
Let $\aa_1,\dots,\aa_p$, $C$, $\ww$, and $k$ be as described above. For each $i\in[k]$, let 
\begin{equation}\label{eq:C_i:=}
	C_i:=\R^+_{\vp_{ii}}\ww+\sum_{j\in[p]\setminus\{i\}}\R^+_{\vp_{ij}}\aa_j,    
\end{equation}
where the $\vp_{ij}$'s are any numbers in the set $\{0,1\}$ satisfying the following conditions: 
\begin{enumerate}[(i)]
	\item \label{rest} $\vp_{ij}=\vp_j$ for $i\in[k]$ and $j\in[p]\setminus[k]$; 
	\item \label{diag} $\vp_{ii}=\vp_i$ for $i\in[k]$; 
	\item \label{symm} $\vp_{ij}+\vp_{ji}=1$ for any distinct $i$ and $j$ in $[k]$; 
	\item \label{less} for each $i\in[k]$, the condition $\vp_i=1$ implies $\vp_{ij}\le\vp_j$ 
	for all $j\in[k]$;
	\item \label{=1} for each nonempty subset $J$ of the set $[k]$, there is some $i\in J$ such that for all $j\in J\setminus\{i\}$ one has $\vp_{ij}=1$. 
\end{enumerate}
Then 
\begin{equation}\label{eq:C_i-decomp}
	\lb C\rb=\sum_{i\in[k]}\lb C_i\rb.
\end{equation}
\end{lemma}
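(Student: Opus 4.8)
The plan is to prove \eqref{eq:C_i-decomp} as a pointwise identity: for each $\xx$ in the (closed) cone spanned by $\ww,\aa_1,\dots,\aa_p$ I will show that $\xx$ lies in exactly one of the sets $C_i$, $i\in[k]$, provided $\xx\in C$, and in none of them otherwise; outside the span both sides vanish trivially. Fix $\xx$ in the relevant linear span and write $\xx=y\ww+\sum_{j\in[p]}x_j\aa_j$ in the unique way with $x_j=0$ for $j\in[p]\setminus[k]$ (this is the normalization coming from \eqref{eq:w}); equivalently, since $\ww=\sum_j w_j\aa_j$, the genuine $A$-coordinates of $\xx$ are $t_j:=x_j+y\,w_j$. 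Membership $\xx\in C$ means $t_j\in\R^+_{\vp_j}$ for all $j$, and membership $\xx\in C_i$ (reading off the representation \eqref{eq:C_i:=}, whose coordinates relative to $\ww$ and $\{\aa_j\}_{j\ne i}$ are $y+x_i/w_i\cdot w_i$... more precisely one solves for the $C_i$-coordinates in terms of $t_\bullet$) translates into a system of sign conditions on the $t_j$ together with one condition on the coefficient of $\ww$.

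The heart of the argument is the following bookkeeping. Using $w_i>0$ for $i\in[k]$ and $w_j=0$ for $j>k$ (condition \eqref{eq:k}), I express the $C_i$-coordinates explicitly; the coefficient of $\ww$ in $C_i$ turns out to be $t_i/w_i$ (up to the obvious shift), and the coefficient of $\aa_j$ for $j\ne i$ becomes $t_j-w_j\,t_i/w_i$. Condition \eqref{rest} handles the coordinates $j>k$ automatically, since there $w_j=0$ and the condition is just $t_j=x_j\in\R^+_{\vp_j}$, the same as for $C$. So everything reduces to the indices in $[k]$. For $\xx\in C$ one has all $t_j\ge0$ (and strict where $\vp_j=0$); I claim $\xx\in C_i$ for the unique $i\in[k]$ that \emph{minimizes the ratio} $t_i/w_i$ among $j\in[k]$ — with ties broken by conditions \eqref{symm} and \eqref{=1}, which is exactly why those hypotheses are imposed. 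Indeed, for that minimizing $i$ one gets $t_j-w_j t_i/w_i\ge0$ for every $j\in[k]$, and the remaining strictness/sign conditions $\vp_{ij}$ versus $\vp_j$ are reconciled using \eqref{diag}, \eqref{less} and \eqref{symm}; for any non-minimizing $i$ some coordinate $t_j-w_j t_i/w_i$ is negative, so $\xx\notin C_i$. Conversely, if $\xx\notin C$ then some $t_j\notin\R^+_{\vp_j}$, and one checks the same coordinate kills membership in every $C_i$ (again leaning on \eqref{symm} and \eqref{less} to propagate a violated sign to all $i$). Condition \eqref{=1} is precisely what guarantees that on the boundary locus where several ratios $t_i/w_i$ coincide — say the set $J\subseteq[k]$ of indices achieving the minimum — exactly one $i\in J$ has all the needed $\vp_{ij}=1$ ($j\in J\setminus\{i\}$), so that $\xx$ falls into exactly that one $C_i$ and no other, preserving the count of $1$ on both sides of \eqref{eq:C_i-decomp}.

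Concretely I would organize the proof as: (1) reduce to $\xx$ in the linear span and introduce the coordinates $y,(x_j)$ and the ``true'' coordinates $(t_j)$; (2) record the explicit change of coordinates into each $C_i$ and note that coordinates indexed by $[p]\setminus[k]$ behave identically for $C$ and all $C_i$ by \eqref{rest}; (3) \textbf{the main lemma}: define $\mu:=\min_{j\in[k]} t_j/w_j$ (interpreting the open/closed distinctions carefully), let $J$ be the argmin set, and show $\xx\in C_i\iff \xx\in C$ and $i$ is the distinguished element of $J$ singled out by \eqref{=1}; (4) conclude that $\sum_{i\in[k]}\lb C_i\rb(\xx)=\lb C\rb(\xx)$ for every $\xx$. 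The step I expect to be the genuine obstacle is (3), and within it the careful handling of the boundary faces where the ratios tie and where some $\vp_j=0$ (open half-line) rather than $1$: this is where conditions \eqref{symm}, \eqref{less}, and especially the subtle \eqref{=1} must all be invoked in concert, and where an off-by-one in open-versus-closed could break the identity. The interior of the cone, by contrast, is routine — there the argmin is unique and the ratio test immediately gives membership in exactly one $C_i$.
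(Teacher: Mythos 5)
Your proposal takes essentially the same route as the paper's proof: change coordinates to the bases $(\aa_1,\dots,\aa_{i-1},\ww,\aa_{i+1},\dots,\aa_p)$, observe that the resulting coordinates are $t_i/w_i$ and $w_j\big(t_j/w_j - t_i/w_i\big)$, and pick from the argmin set of the ratios $t_j/w_j$ over $j\in[k]$ the element singled out by condition (v), while (iii) gives disjointness and (ii), (iv) give $C_i\subseteq C$. The paper organizes this as three separate steps (each $C_i\subseteq C$; the $C_i$ pairwise disjoint; $C\subseteq\bigcup_i C_i$) rather than as a single pointwise biconditional, but the coordinate bookkeeping, the ratio test, and the role of each hypothesis match yours exactly.
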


We also have 
\begin{lemma}\label{lem:eps}
Take any $\vp_1,\dots,\vp_p$ in $\{0,1\}$ and any $k\in[p]$. Then there exist numbers $\vp_{ij}$ in the set $\{0,1\}$ satisfying all the conditions \eqref{rest}--\eqref{=1} in Lemma~\ref{lem:C-decomp}. 
\end{lemma}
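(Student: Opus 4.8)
The plan is to give an explicit construction of the numbers $\vp_{ij}$ and then verify, one at a time, the five conditions (i)--(v) of Lemma~\ref{lem:C-decomp}. The key structural observation is that conditions (i) and (ii) merely \emph{prescribe} the entries $\vp_{ij}$ with $j\in[p]\setminus[k]$ together with the diagonal entries $\vp_{ii}$, so the only entries actually left to choose --- and the only ones touched by (iii), (iv), (v) --- are the $\vp_{ij}$ with $i$ and $j$ distinct elements of $[k]$; these we regard as an orientation (tournament) of the complete graph on $[k]$.

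First I would set $K_0:=\{i\in[k]\colon\vp_i=0\}$ and $K_1:=\{i\in[k]\colon\vp_i=1\}$, and fix any bijection $\rho\colon[k]\to[k]$ that sends $K_0$ onto the initial segment $\{1,\dots,|K_0|\}$ (equivalently, any linear ranking of $[k]$ in which every index with $\vp_i=0$ precedes every index with $\vp_i=1$); such a $\rho$ plainly exists, whatever $K_0,K_1$ are. I then \emph{define} $\vp_{ij}:=\vp_j$ for $i\in[k]$, $j\in[p]\setminus[k]$ (this is condition (i)); $\vp_{ii}:=\vp_i$ for $i\in[k]$ (this is condition (ii)); and $\vp_{ij}:=\ii\{\rho(i)<\rho(j)\}$ for distinct $i,j\in[k]$.

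Next I would check (iii), (iv), (v). Condition (iii) is immediate since $\rho$ is injective, so for $i\ne j$ exactly one of $\ii\{\rho(i)<\rho(j)\}$, $\ii\{\rho(j)<\rho(i)\}$ equals $1$. For (v): given a nonempty $J\subseteq[k]$, let $i_0$ be the element of $J$ with smallest $\rho$-value; then $\vp_{i_0 j}=\ii\{\rho(i_0)<\rho(j)\}=1$ for every $j\in J\setminus\{i_0\}$, so $i_0$ works. For (iv): fix $i\in[k]$ with $\vp_i=1$, i.e.\ $i\in K_1$, and take $j\in[k]$. If $j=i$, then $\vp_{ii}=\vp_i=1\le\vp_i$. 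If $j\ne i$ and $\vp_j=1$, then $\vp_{ij}\le1=\vp_j$. If $j\ne i$ and $\vp_j=0$, then $j\in K_0$, so $\rho(j)<\rho(i)$ by the choice of $\rho$, and hence $\vp_{ij}=\ii\{\rho(i)<\rho(j)\}=0=\vp_j$.

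I do not anticipate a real obstacle. The only point worth a moment's thought is why conditions (iii)--(v) can be met simultaneously: (v) amounts to the tournament on $[k]$ being transitive, i.e.\ to its being the comparison tournament of some linear order on $[k]$ (and then (v) holds automatically, for \emph{any} such order); (iv) then only dictates which linear order to use, namely one placing all indices with $\vp_i=1$ after all indices with $\vp_i=0$; and (iii) is automatic for any comparison tournament, while (i) and (ii) place no demand on these entries at all. Since a linear order with $K_0$ ranked below $K_1$ obviously exists, the construction goes through.
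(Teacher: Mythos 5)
Your proof is correct and takes essentially the same route as the paper: both pick a linear order on $[k]$ in which every index with $\vp_i=0$ precedes every index with $\vp_i=1$, set $\vp_{ij}=1$ exactly when $i$ comes before $j$ in that order, and then read off (iii)--(v). The paper phrases this as ``w.l.o.g.\ $\vp_j$ is nondecreasing in $j\in[k]$'' and uses the standard order, whereas you make the relabeling explicit via a bijection $\rho$ --- a cosmetic difference only.
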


We shall prove these two lemmas in a moment. 

Letting now $J_i=\{j\in[p]\colon\vp_{ij}=
0\}$ for each $i\in[k]$ (so that $\vp_{ij}=
1-\lb J_i\rb(j)$ for all $i\in[k]$ and $j\in[p]$), we will 
have $C_i=A_i\R^+_{J_i}$ for $i\in[k]$, which will 
complete the step described 
in the paragraph containing formulas \eqref{eq:w}--\eqref{eq:A_i}. Thus, to complete the entire proof of Proposition~\ref{prop:C-decomp}, it remains to prove Lemmas~\ref{lem:C-decomp} and \ref{lem:eps}.


\begin{proof}[Proof of Lemma~\ref{lem:C-decomp}]
Take any $\xx\in\R^p$. Let $(y_1,\dots,y_p)=(y_1(\xx),\dots,y_p(\xx))$ denote the $p$-tuple of the coordinates of the vector $\xx$ in the basis $(\aa_1,\dots,\aa_p)$ of $\R^p$, so that 
\begin{equation}\label{eq:xx=}
	\xx=\sum_{j\in[p]}y_j\aa_j. 
\end{equation}
Also, for each $i\in[k]$, let $(y_{i1},\dots,y_{ip})=(y_{i1}(\xx),\dots,y_{ip}(\xx))$ denote the $p$-tuple of the coordinates of the vector $\xx$ in the basis $(\aa_1,\dots,\aa_{i-1},\ww,\aa_{i+1},\dots,\aa_p)$ of $\R^p$, so that 
\begin{equation*}
	\xx=y_{ii}\ww+\sum_{j\in[p]\setminus\{i\}}y_{ij}\aa_j
	=y_{ii}w_i\aa_i+\sum_{j\in[p]\setminus\{i\}}(y_{ii}w_j+y_{ij})\aa_j. 
\end{equation*}
In view of \eqref{eq:w} and \eqref{eq:k}, 
\begin{equation}\label{eq:vp_i=vp_ij}
	y_{ij}=y_j\quad\text{for } i\in[k],\ j\in[p]\setminus[k]. 
\end{equation}
As for $i$ and $j$ in $[k]$, we have $y_i=y_{ii}w_i$ and $y_j=y_{ii}w_j+y_{ij}
=\frac{y_i}{w_i}\,w_j+y_{ij}$ if $j\ne i$, which can be rewritten as 
\begin{equation}\label{eq:r}
\forall(i,j)\in[k]\times[k]\ 
	\Big(y_{ii}w_i=y_i\quad\text{and}\quad
	j\ne i\implies \frac{y_{ij}}{w_j}=r_j-r_i\Big), 
\end{equation}
where 
\begin{equation*}
	r_j:=r_j(\xx):=\frac{y_j}{w_j}. 
\end{equation*}

Note that \eqref{eq:C_i-decomp} means precisely that $C$ is the disjoint union of the $C_i$'s. Thus, the proof of Lemma~\ref{lem:C-decomp} will be completed in the following three steps. 

\textbf{Step 1: checking that $C_i\subseteq C$ for each $i\in[k]$.} Take indeed any $i\in[k]$, and then take any $\xx\in C_i$, so that, by \eqref{eq:C_i:=}, $y_{ij}\in\R^+_{\vp_{ij}}$ for all $j\in[p]$.  
Then $y_{ii}\in\R^+_{\vp_{ii}}=\R^+_{\vp_i}$ by condition \eqref{diag} of Lemma~\ref{lem:C-decomp} and hence $y_i=y_{ii}w_i\in\R^+_{\vp_i}$. 
Also, by \eqref{eq:vp_i=vp_ij} and condition \eqref{rest} of Lemma~\ref{lem:C-decomp}, $y_j=y_{ij}\in\R^+_{\vp_{ij}}=\R^+_{\vp_j}$ for $j\in[p]\setminus[k]$. 

If $y_i>0$, then $y_j=\frac{y_i}{w_i}\,w_j+y_{ij}>y_{ij}\ge0$ for all $j\in[k]\setminus\{i\}$, whence $y_j>0$ for all $j\in[k]$. 
So, by \eqref{eq:xx=}, 
$\xx\in\sum_{j\in[k]}\R^+_0\aa_j+\sum_{j\in[p]\setminus[k]}\R^+_{\vp_j}\aa_j
\subseteq\sum_{j\in[p]}\R^+_{\vp_j}\aa_j=C$. 

If now $y_i=0$, then the mentioned condition $y_i\in\R^+_{\vp_i}$ implies $\vp_i=1$. So, by condition \eqref{less} of Lemma~\ref{lem:C-decomp}, for all $j\in[k]$ we have $\vp_{ij}\le\vp_j$ and hence $\R^+_{\vp_{ij}}\subseteq\R^+_{\vp_j}$, which yields  
$y_j=\frac{y_i}{w_i}\,w_j+y_{ij}=y_{ij}\in\R^+_{\vp_{ij}}\subseteq\R^+_{\vp_j}$. 
So, in this case as well, $\xx\in C$. 

\textbf{Step 2: checking that the $C_i$'s are disjoint.} Take any distinct $i$ and $j$ in $[k]$, and then take any $\xx\in C_{i}\cap C_{j}$. 
Then $y_{ij}\in\R^+_{\vp_{ij}}$,  
whence, by \eqref{eq:r}, $r_{j}-r_{i}=y_{ij}/w_{j}\in\R^+_{\vp_{ij}}$. Similarly,  
$r_{i}-r_{j}\in\R^+_{\vp_{ji}}$, that is, 
$r_{j}-r_{i}\in-\R^+_{\vp_{ji}}=\R\setminus\R^+_{\vp_{ij}}$, by condition \eqref{symm} of Lemma~\ref{lem:C-decomp}. Thus, 
$r_{j}-r_{i}\in\R^+_{\vp_{ij}}\cap
\big(\R\setminus\R^+_{\vp_{ij}}\big)=\emptyset$, 
which is a contradiction. 

\textbf{Step 3: checking that $C\subseteq\bigcup_{i\in[k]}C_i$.} Take any $\xx\in C$, so that $y_j\in\R^+_{\vp_j}$ for all $j\in[p]$. 
Let 
\begin{equation*}
	J_\xx:=\{i\in[k]\colon r_i(\xx)\le r_j(\xx)\ \forall j\in[k]\}. 
\end{equation*}
Then, by \eqref{eq:r}, $y_{ij}\ge0$ for all $i\in J_\xx$ and $j\in[k]$. 
Moreover,  
$r_j(\xx)>r_i(\xx)
$ for all $i\in J_\xx$ and $j\in[k]\setminus J_\xx$.
%
So, again by \eqref{eq:r}, for all $i\in J_\xx$ and $j\in[k]\setminus J_\xx$ we have $y_{ij}>0$, so that $y_{ij}\in\R^+_0\subseteq\R^+_{\vp_{ij}}$. 
Note that $J_\xx\ne\emptyset$. 
So, by condition \eqref{=1} of Lemma~\ref{lem:C-decomp},  
there is some $i_\xx\in J_\xx$ such that for all $j\in J_\xx\setminus\{i_\xx\}$ one has $\vp_{i_\xx j}=1$, so that $y_{i_\xx j}\in\R^+_{\vp_{i_\xx j}}$. 
Thus, $y_{i_\xx j}\in\R^+_{\vp_{i_\xx j}}$ for all $j\in[k]\setminus\{i_\xx\}$. 
Also, $y_{i_\xx i_\xx}\in\R^+_{\vp_{i_\xx i_\xx}}$ -- in view of the first equality in \eqref{eq:r}, the condition $y_i\in\R^+_{\vp_i}$ for all $i\in[p]$, and condition \eqref{diag} of Lemma~\ref{lem:C-decomp}. 
Moreover, $y_{i_\xx j}\in\R^+_{\vp_{i_\xx j}}$ for all $j\in[p]\setminus[k]$ -- in view of \eqref{eq:vp_i=vp_ij}, the condition $y_j\in\R^+_{\vp_j}$ for all $j\in[p]$, and condition \eqref{rest} of Lemma~\ref{lem:C-decomp}.    
We conclude that $y_{i_\xx j}\in\R^+_{\vp_{i_\xx j}}$ for all $j\in[p]$, that is, $\xx\in C_{i_\xx}\subseteq\bigcup_{i\in[k]}C_i$. 

Lemma~\ref{lem:C-decomp} is now proved. 
\end{proof}

\begin{proof}[Proof of Lemma~\ref{lem:eps}]
For $i\in[k]$ and $j\in[p]\setminus[k]$, let $\vp_{ij}:=\vp_j$, in accordance with condition \eqref{rest} of Lemma~\ref{lem:C-decomp}. 

Similarly, let $\vp_{ii}:=\vp_i$ for $i\in[k]$, in accordance with condition \eqref{diag} of Lemma~\ref{lem:C-decomp}.

Next, w.l.o.g.\ $\vp_j$ is nondecreasing in $j\in[k]$. Let then 
$\vp_{ij}:=1$ and $\vp_{ji}:=0$ for all $i$ and $j$ in $[k]$ with $i<j$. 

It is now straightforward to check that all the conditions \eqref{rest}--\eqref{=1} in Lemma~\ref{lem:C-decomp} hold. 
In particular, concerning condition \eqref{less}, note that, if $\vp_i=1$ and $\vp_{ij}=1$ for some distinct $i$ and $j$ in $[k]$, then $i<j$ and hence $1=\vp_i\le\vp_j$, so that $\vp_j=1$. 
Concerning condition \eqref{=1}, for each nonempty subset $J$ of the set $[k]$, let $i:=\min J$; then 
for all $j\in J\setminus\{i\}$ one has $i<j$ and hence 
$\vp_{ij}=1$. 
Lemma~\ref{lem:eps} is now proved.   
\end{proof}

The entire proof of Proposition~\ref{prop:C-decomp} is thus complete. 
\end{proof} 


\bibliographystyle{abbrv}      

\bibliography{P:/pCloudSync/mtu_pCloud_02-02-17/bib_files/citations12.13.12}

\end{normalsize}

\end{document}